\theoremstyle{plain}
\newtheorem{theorem}{Theorem}[section]
\newtheorem{lemma}[theorem]{Lemma}
\newtheorem{corollary}[theorem]{Corollary}
\newtheorem{definition}[theorem]{Definition}
\newtheorem{remark}[theorem]{Remark}
\newtheorem{assumption}[theorem]{Assumption}
\numberwithin{equation}{section}
\author[,1]{Frédéric Nataf\thanks{{frederic.nataf@sorbonne-universite.fr}}}
\author[,1]{Emile Parolin\thanks{{emile.parolin@inria.fr}}}
\affil[1]{Sorbonne Université, Université Paris Cité, CNRS, INRIA, Laboratoire Jacques-Louis Lions, LJLL, EPC ALPINES, F-75005 Paris, France}
\title{
    Coarse spaces for non-symmetric two-level preconditioners
    based on local extended generalized eigenproblems
}
\date{\today}
\begin{document}
\maketitle
\begin{abstract}
    Domain decomposition (DD) methods are a natural way to take advantage of
    parallel computers when solving large scale linear systems. 
    Their scalability depends on the design of the coarse space used in the
    two-level method.  
    The analysis of adaptive coarse spaces we present here is quite general
    since it applies to symmetric and non-symmetric problems, to symmetric
    preconditioners such as the additive Schwarz method (ASM) and to the
    non-symmetric preconditioner restricted additive Schwarz (RAS), as well as
    to exact or inexact subdomain solves.
    The coarse space is built by solving generalized eigenproblems in the
    subdomains and applying a well-chosen operator to the selected
    eigenvectors.
\end{abstract}

\paragraph{Keywords}
domain decomposition method, Schwarz method, coarse space, two-level, preconditioning

\paragraph{MSC codes}
65F08, 
65F10, 
65N22, 
65N30, 
65N55, 
65Y05  

\clearpage
\tableofcontents
\clearpage

\section{Introduction}

The scalability of domain decomposition methods depends on the design of a
suitable coarse
space~\cite{Quarteroni1999,Toselli:2005:DDM,Mathew:2008:DDM,Mandel:2011:CSA,Dolean:2015:IDDSiam}.
As with multigrid methods, an efficient coarse space should contain the
near-kernel of the matrix. For a Poisson problem, in~\cite{Nicolaides:DCG:1987}
the coarse space is built from constant functions that are multiplied for each
subdomain by the local value of the partition of unity function. For elasticity
problems, effective coarse spaces are made of rigid body modes also multiplied
by partition of unity functions, see~\cite{Mandel:1992:BDD}. When the
coefficients of the underlying partial differential equations are not smooth
per subdomain, the coarse space has to be enriched adaptively by selecting
eigenvectors obtained by solving local generalized eigenvalue problems (GEVP),
see
\cite{Mandel:2007:ASF,Nataf:CSC:2011,Efendiev:2012:RDD,Spillane:2014:ASC,Gander2017SHEM,Haferssas:ASM:2017,Bootland:2023:overlapping}
and references therein. For more algebraic works, we refer also to
\cite{AlDaas:2022:robust,AlDaas:2023:efficient,Heinlein:2022:fully} and
references therein.
For inexact local solvers in the subdomains, we refer to the analysis
in~\cite{Spillane2025}.
Note that in some works, the local GEVPs are restricted to
local harmonic functions see e.g.\ \cite{Nataf:CSC:2011} and the recent
works~\cite{Ma:2022:novel,Ma:2022:error,Hu:2024:novel,Ma:2024:TLR}. Actually,
in~\cite{Ma:2022:novel,Ma:2022:error,Ma:2024:TLR} such constructions are adapted to the
design of multiscale finite element methods where the coarse space is used as a
kind of reduced order space. There, the GenEO (Generalized Eigenvalue problem
in the Overlap) introduced in~\cite{Spillane:2011:RTL,Spillane:2014:ASC} is
restricted to local harmonic vectors/functions. Closer to our work
is~\cite{Hu:2024:novel} where the coarse space is analyzed in the context of
domain decomposition preconditioner. More precisely, the authors add a second
level to the ORAS~\cite{STCYR:2007:OMA}  (Optimized Restricted Additive
Schwarz) preconditioner which is commonly used to solve wave propagation
problems, see~\cite{Despres:1992:ADD} for the original one-level method. It is
proved that an efficient coarse space can be built by solving in each subdomain
a GEVP for locally harmonic functions.

In this work, we design and analyse
spectral coarse space for quite general domain decomposition methods such as
RAS (Restricted Additive Schwarz), AS (Additive Schwarz) or SORAS (Symmetric
Optimized Restricted Additive Schwarz) without assuming that the local solvers
yield errors that are harmonic in the subdomains. This includes inexact local
solves in the subdomains. Then the contribution of the local GEVP to the coarse
space is built from local eigenvectors that are non-harmonic.
We also show, under suitable assumptions, that when the local solvers yield
errors that are harmonic in the subdomains, we recover the coarse space
introduced in~\cite{Ma:2022:novel}.

In summary, the main highlights of our work are:
\begin{itemize}
    \item Design and analysis of a new coarse space named Extended GenEO (see
        Definition~\ref{def:newCS}) that is valid for symmetric and non-symmetric
        matrices and/or one-level preconditioners;
    \item A first convergence proof for a two-level RAS with inexact local
        solves (the alternative analysis of~\cite{Spillane2025} covers the case
        of symmetric preconditioners);
    \item With exact local solves, the local components of the coarse space are
        local harmonic functions multiplied by a partition of unity, see
        Lemma~\ref{lem:exactlocalsolvesHarmonic}. 
    \item When applied to SPD (Symmetric Definite Positive) problems with exact
        local solves, we prove in Lemma~\ref{th:harmonicgev} that the GEVP that
        yields the coarse space is equivalent to the ones
        in~\cite{Ma:2022:novel}.
    \item Numerical results for highly heterogeneous non-symmetric problems. 
\end{itemize}

In~\S~\ref{sec:abstract_analysis}, we present an abstract analysis of two-level
domain decomposition methods. In \S~\ref{sec:practical_methods}, this general
framework is applied to the RAS, AS and SORAS methods.
In~\S~\ref{sec:comparision_with_existing_CS}, we compare the extended GenEO coarse spaces
to those designed and analyzed using the fictitious space
lemma~\cite{Nepomnyaschikh:1991:MTT,Griebel:1995:ATA} in the symmetric positive
definite case (see~\cite{Spillane:2011:RTL,Dolean:2015:IDDSiam} for the AS
method and \cite{Haferssas:ASM:2017} for SORAS).

\section{Abstract analysis}
    \label{sec:abstract_analysis}
\subsection{Setting}

We consider the linear system
\begin{equation}
    \label{eq:syslin}
    A u = f,
\end{equation}
where \(A \in \mathbb{C}^{\#\mathcal{N}\times\#\mathcal{N}}\),
\(f \in \mathbb{C}^{\#\mathcal{N}}\),
\(u \in \mathbb{C}^{\#\mathcal{N}}\)
and \(\mathcal{N}\) is the set of unknowns with \(\#\mathcal{N}\) denoting
its cardinal.

The matrix \(A\) is assumed to be invertible so that the above problem is
well-posed, but it is not assumed a priori to be symmetric positive definite
(SPD).

\subsection{Domain decomposition preconditioners}

The global set of indices denoted by \(\mathcal{N}\) is decomposed, with or
without overlaps, into \(J\) subsets \((\mathcal{N}_j)_{= 1, \dots, J}\)
defining \(J\) subdomains. 

\paragraph{Restriction matrices}
The decomposition of the set of degrees of freedom \(\mathcal{N}\) is
characterized by the Boolean restriction matrices
\begin{equation}
    R_{j} \in \mathbb{R}^{\# {\mathcal{N}_j}\times \#\mathcal{N}},
    \qquad j = 1, \dots, J.
\end{equation}
We assume the following usual property for this decomposition
\begin{align}
    & R_{j}^{} R_{j}^{*} = I_j,
    \qquad j = 1, \dots, J,
\end{align}
where \(I_j \in \mathbb{R}^{\# {\mathcal{N}_j}\times \#\mathcal{N}_{j}}\) is the
identity operator on the subdomain \(j\).

\paragraph{Partition of unity}

This decomposition is moreover associated to a partition of unity given by
diagonal matrices with non-negative entries
\begin{equation}
    D_{j} \in \mathbb{R}^{\# {\mathcal{N}_j}\times \# \mathcal{N}_j},
    \qquad j = 1, \dots, J,
\end{equation}
such that the following identity is valid
\begin{equation}
    I = \sum_{j=1}^{J} R_{j}^{*} D_{j}^{} R_{j}^{},
\end{equation}
where \(I \in \mathbb{R}^{\# {\mathcal{N}}\times \#\mathcal{N}}\) is the
identity operator on the non-decomposed domain.

\paragraph{Local solvers}

In order to be able to define a preconditioner we introduce a family of
local solvers which are characterized by square matrices of size
\(\#\mathcal{N}_j \times \#\mathcal{N}_j\) not necessarily invertible:
\begin{equation}
    S_{j} \in {\mathbb C}^{\#\mathcal{N}_j \times \#\mathcal{N}_j},
    \qquad j = 1, \dots, J.
\end{equation}
This section is purposely general, so we do not assume a particular form for
the local solvers. 
Nevertheless, to fix ideas, let's give some standard examples.
Consider some invertible operator \(B_{j}\),
typically equal to the restriction of the global matrix to the subdomain \(j\): 
\(R_{j}^{} A R_{j}^{*}\) (for a SPD matrix \(A\)), or an approximation thereof.
Taking the local solver to be
\begin{equation}\label{eq:ras}
    S_{j}^{} := D_{j}^{} B_{j}^{-1},
    \qquad j=1,\dots,J,
\end{equation}
defines the Restricted Additive Schwarz (RAS)~\cite{Cai:1999:RAS} method in the
forthcoming~\eqref{eq:abstract_preconditioner}.
This non-symmetric solver (even when \(A\) is SPD) is the main motivation of
the proposed analysis and is investigated in more details in
Section~\ref{sec:ras}.
Alternatively, the symmetric (for a symmetric \(B_{j}\)) local solver
\begin{equation}\label{eq:as}
    S_{j}^{} := B_{j}^{-1},
    \qquad j=1,\dots,J,
\end{equation}
defines the Additive Schwarz (AS) method, discussed in Section~\ref{sec:as}.
An alternative symmetric (for a symmetric \(B_{j}\)) version of the RAS method
is
\begin{equation}\label{eq:soras}
    S_{j}^{} := D_{j}^{} B_{j}^{-1} D_{j}^{},
    \qquad j=1,\dots,J.
\end{equation}
For instance, the Symmetrized Optimized Restricted Additive Schwarz method
(SORAS) takes the form~\eqref{eq:soras} and is discussed in
Section~\ref{sec:soras}.

\paragraph{One-level preconditioner}

We can now define local preconditioners
\begin{equation}
    M_{j}^{-1} := R_{j}^{*} S_{j} R_{j}^{},
    \qquad j = 0, \dots, J.
\end{equation}
Note that despite the superscript ${}^{-1}$ these local contributions are not
invertible.
We consider a one-level preconditioner built additively using the local
preconditioners
\begin{align}
    \label{eq:abstract_preconditioner}
    \sum_{j=1}^{J} M_{j}^{-1}.
\end{align}
The purpose of the following analysis is to provide a somewhat systematic way
of constructing a global coarse space from only local contributions.
This should allow building scalable two-levels methods with a coarse space that
can be built in parallel.
To do this we focus first on the analysis of the one-level method in order to
identify the obstruction to its scalability.

\subsection{Extended decomposition}\label{sec:extension}

The one-level error propagation operator is
\begin{align}
    \label{eq:one-level-prop-error}
    I - \sum_{j=1}^{J} M_{j}^{-1} A
    & = I - \sum_{j=1}^{J} R_{j}^{*} S_{j} R_{j}^{} A
    = \sum_{j=1}^{J} R_{j}^{*} (D_{j}^{} R_{j}^{} - S_{j}^{} R_{j}^{} A).
\end{align}
Its analysis would be simplified if we could factor out the operator $R_{j}$ on
the right of each term in the sum.
Then, it would consist of purely local contributions.
More precisely, each term of the sum takes a global vector and outputs a global
vector whose support is purely local, namely in the range of \(R_{j}^{*}\) or
in other words with non-trivial elements only associated to \({\mathcal N}_j\).
To compute each local contribution, it is a priori necessary to know non-local
values of the input global vector (because of the matrix-vector product with
the matrix \(A\)), i.e.\  also values in the range of the projection \(I -
R_{j}^{*}R_{j}^{}\).
The forthcoming abstract analysis relies on the contrary that each local
contribution can be computed from local data.
It is possible to achieve this in a rather systematic way at the price of
extending the decomposition according to the adjacency graph of the matrix
\(A\), which is the motivation for the following definitions.

\paragraph{Extended decomposition}

We introduce another decomposition of the global set of indices \(\mathcal{N}\)
into \(J\) subsets \((\tilde{\mathcal{N}}_j)_{j=1,\dots,J}\) such that
\(\mathcal{N}_{j} \subset \tilde{\mathcal{N}}_{j}\).
This new decomposition into extended subdomains is characterized by their
associated Boolean restriction matrices
\begin{equation}
    \tilde{R}_{j}
    \in \mathbb{R}^{\# \tilde{\mathcal{N}}_{j} \times \# \mathcal{N}}, 
    \qquad j = 1, \dots, J.
\end{equation}
We also assume the following usual property for this decomposition
\begin{align}
    & \tilde{R}_{j}^{} \tilde{R}_{j}^{*} = \tilde{I}_j,
    \qquad j = 1, \dots, J,
\end{align}
where
\(\tilde{I}_j\in \mathbb{R}^{\# \tilde{\mathcal{N}}_{j} \times \# \tilde{\mathcal{N}}_{j}}\)
is the identity operator on the extended subdomain
\(\tilde{\mathcal{N}}_j\) that contains \(\mathcal{N}_j\).
Besides, we introduce the following operator that makes the link between the
two decompositions
\begin{equation}
    Q_{j} := R_{j}^{} \tilde{R}_{j}^{*}
    \in \mathbb{R}^{\# \mathcal{N}_j \times \# \tilde{\mathcal{N}}_j}, 
    \qquad j=1,\dots,J,
\end{equation}
by restricting data on \(\tilde{\mathcal{N}}_j\) to their values on  \(\mathcal{N}_j\).
To achieve the above-mentioned goal, we make the following assumption, which is not
difficult to satisfy in practice since it usually amounts to enlarging the
overlap (typically with one cell layer for a finite element discretization).
\begin{assumption}\label{hyp:tilde_decomposition}
    The two decompositions \(R_{j}^{}\) and \(\tilde{R}_{j}^{}\) are such that
    \begin{align}\label{eq:assumption_tilde_decomposition}
        R_{j}^{} = Q_{j}^{} \tilde{R}_{j}^{}\,
        \text{ and }\, 
        Q_{j}^{} \tilde{R}_{j}^{} A (I - \tilde{R}_{j}^{*} \tilde{R}_{j}^{})
        = R_{j}^{} A (I - \tilde{R}_{j}^{*} \tilde{R}_{j}^{}) = 0,
        \qquad j = 1, \dots, J.
    \end{align}
\end{assumption}
Note that the first identity in~\eqref{eq:assumption_tilde_decomposition} is
trivially satisfied if 
\(\mathcal{N}_{j} \subset \tilde{\mathcal{N}}_{j}\).

\paragraph{Extended partition of unity}

Using the partition of unity from the first decomposition we can define a
partition of unity associated to the second decomposition and characterized by
the diagonal matrices
\begin{equation}
    \tilde{D}_{j}
    := Q_{j}^{*} D_{j}^{} Q_{j}^{} \in \mathbb{R}^{\# \tilde{\mathcal{N}}_j \times \# \tilde{\mathcal{N}}_j},
    \qquad j=1,\dots,J.
\end{equation}
Using the first equation in~\eqref{eq:assumption_tilde_decomposition} it is
readily checked that they indeed form a partition of unity
\begin{align}
    I
    = \sum_{j=1}^{J} R_{j}^{*} D_{j} R_{j}^{}
    = \sum_{j=1}^{J} \tilde{R}_{j}^{*}
    Q_{j}^{*} D_{j}^{} Q_{j}^{}
    \tilde{R}_{j}^{}
    = \sum_{j=1}^{J} \tilde{R}_{j}^{*} \tilde{D}_{j}^{} \tilde{R}_{j}^{}.
\end{align}

\paragraph{Extended local solvers}

Similarly, we introduce local solvers associated to the second decomposition
\begin{equation}
    \tilde{S}_{j} := Q_{j}^{*} S_{j}^{} Q_{j}^{} \in \mathbb{C}^{\# \tilde{\mathcal{N}}_j \times \# \tilde{\mathcal{N}}_j},
    \qquad j=1,\dots,J.
\end{equation}
Note that, by definition, these matrices cannot be invertible even if \(S_{j}\) is.

\paragraph{New representations of preconditioners}

Using the two identities in~\eqref{eq:assumption_tilde_decomposition} we can
then give a different expression for the local preconditioners \(M_{j}^{-1}A\),
namely
\begin{align}
    M_{j}^{-1} A
    = R_{j}^{*} S_{j}^{} R_{j}^{} A
    = \tilde{R}_{j}^{*} Q_{j}^{*} S_{j}^{} Q_{j}^{} \tilde{R}_{j}^{} A \tilde{R}_{j}^{*} \tilde{R}_{j}^{} 
    = \tilde{R}_{j}^{*} \tilde{S}_{j}^{} \tilde{R}_{j}^{} A \tilde{R}_{j}^{*} \tilde{R}_{j}^{},
    \qquad j=1,\dots,J.
\end{align}
We therefore obtain a new expression for the one-level error propagation
operator
\begin{align}
    I - \sum_{j=1}^{J} M_{j}^{-1} A
    = \sum_{j=1}^{J}
    \tilde{R}_{j}^{*} \tilde{D}_{j}^{} \tilde{R}_{j}^{}
    - \tilde{R}_{j}^{*} \tilde{S}_{j}^{} \tilde{R}_{j}^{} A \tilde{R}_{j}^{*} \tilde{R}_{j}^{}
    = \sum_{j=1}^{J} \tilde{R}_{j}^{*} \tilde{L}_{j} \tilde{R}_{j}^{},
\end{align}
where we introduced
\begin{equation}
    \label{eq:Ltilde}
    \tilde{L}_{j} := 
    \tilde{D}_{j}^{} - \tilde{S}_{j}^{} \tilde{R}_{j}^{} A \tilde{R}_{j}^{*},
    \qquad j=1,\dots,J.
\end{equation}
The remarkable feature of the new expression of the first-level propagation
operator is that it is made of purely local contributions
\(\tilde{R}_{j}^{*} \tilde{L}_{j} \tilde{R}_{j}^{}\)
(the first operator applied is the restriction \(\tilde{R}_{j}^{}\)).
This property was achieved without making any assumptions on the local sovers,
only by adjusting (enlarging) the initial decomposition.

\subsection{Local generalized eigenvalue problems}

We consider here the one-level method.
The analysis is performed for a norm \(\|\cdot\|_{C}\) defined by a hermitian
positive definite matrix
\(C \in \mathbb{C}^{\# \mathcal{N} \times \# \mathcal{N}}\).
We make this choice to allow a priori problems with non SPD matrices \(A\),
which necessarily require measuring errors in a norm not defined by \(A\),
for instance some energy norm.
The reader can nevertheless safely assume \(C=A\) SPD in a first read.

We are interested in estimating the \(C\)-norm of the one-level error propagation
operator, namely
\begin{equation}
    \| I - \sum_{j=1}^{J} M_{j}^{-1} A \|_{C} 
    := \max_{\substack{u \in \mathbb{C}^{\#\mathcal{N}}\\u \neq 0}} \frac{
        \| (I - \sum_{j=1}^{J} M_{j}^{-1} A) u \|_{C}
    }{
        \|u\|_{C}
    }
    = \max_{\substack{u \in \mathbb{C}^{\#\mathcal{N}}\\u \neq 0}} \frac{
        \| \sum_{j=1}^{J} \tilde{R}_{j}^{*} \tilde{L}_{j}^{} \tilde{R}_{j}^{} u \|_{C}
    }{
        \|u\|_{C}
    },
\end{equation}
in terms of local quantities. We start with an estimation on the numerator based on
the following definition.
\begin{definition}\label{def:k0}
    Let \(k_{0}\) be the maximum multiplicity of the interactions (via the
    non-zero coefficients of the matrix \(C\)) between subdomains plus one.
\end{definition}
Following e.g.~\cite[Lem.~7.11]{Dolean:2015:IDDSiam},
for any \(u \in \mathbb{C}^{\#\mathcal{N}}\) we have:
\begin{equation}\label{eq:k0}
    \| \sum_{j=1}^{J} \tilde{R}_{j}^{*} \tilde{L}_{j}^{} \tilde{R}_{j}^{} u \|_{C}^{2}
    \leq k_{0}
    \sum_{j=1}^{J} \| \tilde{R}_{j}^{*} \tilde{L}_{j}^{} \tilde{R}_{j}^{} u \|_{C}^{2}.
\end{equation}
Note that a sharper estimate holds with $k_0$ replaced by the chromatic number
(the smallest number of colors needed to color the vertices so that no two
adjacent vertices share the same color) of the graph whose nodes are the
subdomains linked by an edge if and only if
\(\tilde{R}_{j}^{} C \tilde{R}_{i}^{*}  \neq 0\) for the subdomains $i$ and
$j$.
To estimate the denominator we make the following assumption.
\begin{assumption}\label{ass:k1}
    There exist local hermitian positive semi-definite operators
    \begin{equation}
        \tilde{C}_{j} \in \mathbb{C}^{\# \tilde{\mathcal{N}}_j \times \# \tilde{\mathcal{N}}_j},
        \qquad j=1,\dots,J,
    \end{equation}
    and some \(k_{1}>0\) such that, for all \(u\in \mathbb{C}^{\# \mathcal{N}}\),
    \begin{equation}\label{eq:spsd-splitting}
        \sum_{j=1}^{J} \left( \tilde{C}_{j} \tilde{R}_{j}^{} u, \tilde{R}_{j}^{} u \right)
        \leq k_{1} \|u\|_{C}^{2}.
    \end{equation}
\end{assumption}
The inequality~\eqref{eq:spsd-splitting} is sometimes referred to as a SPSD
splitting~\cite{alDaas:2019:class}.
If \(C\) stems from the finite element discretization of a hermitian coercive
operator, such operators \(\tilde{C}_{j}\) exist.
Indeed, taking the so-called Neumann matrices
\(\tilde{C}_{j}^{} := \tilde{C}_{j}^{N}\) of the subdomain \(j\),
then~\eqref{eq:spsd-splitting} holds for \(k_{1}\) the maximal multiplicity of
the subdomain intersection, i.e.\ the largest integer \(m\) such that there
exist \(m\) different subdomains whose intersection has a nonzero measure, see
e.g.\ \cite[Lem.~7.13]{Dolean:2015:IDDSiam}.
Similar constructions are also natural for finite volume schemes.

Using~\eqref{eq:k0} and~\eqref{eq:spsd-splitting}, we obtain the estimate
\begin{equation}
    \| I - \sum_{j=1}^{J} M_{j}^{-1} A \|_{C}^{2}
    \leq k_{0} k_{1} \max_{\substack{u \in \mathbb{C}^{\#\mathcal{N}}\\u \neq 0}} \frac{
        \sum_{j=1}^{J} \| \tilde{R}_{j}^{*} \tilde{L}_{j}^{} \tilde{R}_{j}^{} u \|_{C}^{2}
    }{
        \sum_{j=1}^{J} \left( \tilde{C}_{j} \tilde{R}_{j}^{} u, \tilde{R}_{j}^{} u \right)
    }.
\end{equation}
This estimate holds at least formally, since the right-hand-side might be
infinite if there exists \(u\) that makes the denominator vanish, which is a
priori possible in this general setting.

The next step of the derivation of the estimate makes use of the following
elementary lemma.
\begin{lemma}\label{lem:alphabeta}
    Let \(\alpha_{j}\) and \(\beta_{j}\) be non-negative real-valued
    functionals that satisfy the following assumption:
    \(
    \beta_{j}(u)=0
    \Rightarrow
    \alpha_{j}(u)=0, \forall u
    \). 
    Then, there holds
    \begin{equation}
        \frac{
            \sum_{j=1}^{J} \alpha_{j}(u)
        }{
            \sum_{j=1}^{J} \beta_{j}(u)
        }
        \le 
        \max_{j=1}^{J} \max_{v \text{ s.t. }\beta_{j}(v) \neq 0 } \frac{\alpha_{j}(v)}{\beta_{j}(v)},
        \qquad\forall u.
    \end{equation}
\end{lemma}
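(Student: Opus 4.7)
The plan is to split the sum in the numerator according to whether each $\beta_{j}(u)$ vanishes, use the hypothesis to discard the problematic terms, and bound the rest by the claimed maximum times $\beta_{j}(u)$, so that after summation the ratio drops out cleanly.

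More precisely, denote by $M$ the right-hand side of the inequality, i.e.\ $M := \max_{j} \sup\{\alpha_{j}(v)/\beta_{j}(v) : \beta_{j}(v) \neq 0\}$. Fix an arbitrary $u$ for which the denominator $\sum_{j} \beta_{j}(u)$ is strictly positive (otherwise the ratio is not even defined, or all $\alpha_{j}(u)$ vanish by the hypothesis and the inequality is trivial). Partition the index set $\{1,\dots,J\}$ into $J_{0}(u) := \{j : \beta_{j}(u)=0\}$ and $J_{+}(u) := \{j : \beta_{j}(u) \neq 0\}$. For $j \in J_{0}(u)$ the hypothesis gives $\alpha_{j}(u) = 0$, so these indices contribute nothing to $\sum_{j} \alpha_{j}(u)$. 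For $j \in J_{+}(u)$ we can write
\begin{equation*}
    \alpha_{j}(u) = \frac{\alpha_{j}(u)}{\beta_{j}(u)}\,\beta_{j}(u) \le M\,\beta_{j}(u),
\end{equation*}
by definition of $M$. Summing over $j \in J_{+}(u)$, and noting that $\beta_{j}(u) = 0$ for $j \in J_{0}(u)$,
\begin{equation*}
    \sum_{j=1}^{J} \alpha_{j}(u) = \sum_{j \in J_{+}(u)} \alpha_{j}(u) \le M \sum_{j \in J_{+}(u)} \beta_{j}(u) = M \sum_{j=1}^{J} \beta_{j}(u),
\end{equation*}
and dividing by the (positive) denominator gives the claim.

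There is no real obstacle here: the statement is essentially a weighted mediant/Chebyshev-type inequality. The only delicate point is the bookkeeping of the case $\beta_{j}(u)=0$, which is precisely what the hypothesis $\beta_{j}(u)=0 \Rightarrow \alpha_{j}(u)=0$ is designed to handle, ensuring that each term of the numerator either vanishes on its own or can be controlled individually by the corresponding term of the denominator.
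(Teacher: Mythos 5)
Your proof is correct and follows essentially the same route as the paper's: bound each term $\alpha_{j}(u)$ by $\beta_{j}(u)$ times the per-index supremum (the hypothesis handling the indices where $\beta_{j}(u)=0$), then sum and divide. You merely make explicit the case split and the degenerate case of a vanishing denominator, which the paper leaves implicit.
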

\begin{proof}
    The result stems from: for any \(u\)
    \begin{equation}
        \sum_{j=1}^{J} \alpha_{j}(u)
        \le 
        \sum_{j=1}^{J} \beta_{j}(u) \max_{v \text{ s.t. }\beta_{j}(v) \neq 0 }\frac{\alpha_{j}(v)}{\beta_{j}(v)}
        \le 
        \left(\sum_{j=1}^{J} \beta_{j}(u) \right) \max_{k=1}^{J} \max_{v \text{ s.t. } \beta_{k}(v) \neq 0 }\frac{\alpha_{k}(v)}{\beta_{k}(v)}.
    \end{equation}
\end{proof}

In order to ease the analysis we assume only here that the operators
\(\tilde{C}_{j}^{}\) are invertible.
Then, using Lemma~\ref{lem:alphabeta}, we get:
\begin{equation}\label{eq:upper_bound_onelvl_estimate}
    \| I - \sum_{j=1}^{J} M_{j}^{-1} A \|_{C}^{2}
    \leq k_{0} k_{1}
    \max_{\substack{u \in \mathbb{C}^{\#\mathcal{N}}\\u \neq 0}}
    \max_{j=1}^{J}
    \frac{
        \| \tilde{R}_{j}^{*} \tilde{L}_{j}^{} \tilde{R}_{j}^{} u \|_{C}^{2}
    }{
        \left( \tilde{C}_{j}^{} \tilde{R}_{j}^{} u, \tilde{R}_{j}^{} u \right)
    }
    \le k_{0} k_{1}
    \max_{j=1}^{J}
    \max_{\substack{u_{j} \in \mathbb{C}^{\#\tilde{\mathcal{N}}_{j}}\\u_{j} \neq 0}}
    \frac{
        \| \tilde{R}_{j}^{*} \tilde{L}_{j}^{} u_{j} \|_{C}^{2}
    }{
        \left( \tilde{C}_{j} u_{j}, u_{j} \right)
    }.
\end{equation}
The above estimate can be somehow controlled by only local considerations.
This was the motivation to introduce the extended decomposition in
Section~\ref{sec:extension}.
The estimate prompts us to consider the following local generalized eigenvalue
problems
\begin{equation}\label{eq:gevp}
   \begin{cases}
        \text{Find}\ 
        (\lambda_{j},u_{j}) \in \mathbb{R} \times
        \mathbb{C}^{\#\tilde{\mathcal{N}}_{j}}
        \ \text{such that}:\\
        \tilde{L}_{j}^{*} \tilde{R}_{j}^{} C \tilde{R}_{j}^{*} \tilde{L}_{j}^{} u_{j}
        = \lambda_{j} \tilde{C}_{j} u_{j},
   \end{cases} 
   \qquad j=1,\dots,J,
\end{equation}
where \(\tilde{L}_{j}\) is defined in~\eqref{eq:Ltilde}.
In the following we are interested in characterizing eigenvectors associated to
large eigenvalues, which are responsible for a large upper bound
in~\eqref{eq:upper_bound_onelvl_estimate}.

\begin{remark}\label{rem:eigenproblem_kernels}
    The local generalized eigenvalue problems~\eqref{eq:gevp} make sense for
    an eigenvector
    \(
        u_{j} \notin 
        \ker \tilde{C}_{j}
        \cap
        \ker \tilde{L}_{j}^{*} \tilde{R}_{j}^{} C \tilde{R}_{j}^{*} \tilde{L}_{j}^{}
    \)
    (which is usually a trivial set in practice)
    otherwise any eigenvalue \(\lambda_{j} \in \mathbb{R}\) satisfies the equation.
    Besides, a non-trivial eigenvector \( u_{j} \in \ker \tilde{C}_{j} \)
    (which usually has a small dimension in practice)
    but
    \(
        u_{j} \notin 
        \ker \tilde{L}_{j}^{*} \tilde{R}_{j}^{} C \tilde{R}_{j}^{*} \tilde{L}_{j}^{}
    \)
    implies formally \(\lambda_{j} = +\infty\).
    Finally, a non-trivial eigenvector
    \(
        u_{j} \in 
        \ker \tilde{L}_{j}^{*} \tilde{R}_{j}^{} C \tilde{R}_{j}^{*} \tilde{L}_{j}^{}
    \)
    but
    \( u_{j} \notin \ker \tilde{C}_{j} \)
    implies formally \(\lambda_{j} = 0\).
\end{remark}

Let us denote by \(P_{\tilde{C}_{j}}\) the orthogonal (for the Euclidean inner
product) projection on \(\operatorname*{range} \tilde{C}_{j}\).
The following definition is motivated by Remark~\ref{rem:eigenproblem_kernels}.
\begin{definition}\label{def:extendedGenEO}
    The local extended generalized eigenvalue problems we consider are defined as, for \(j=1,\dots,J\):
    \begin{equation}\label{eq:gevp_generic}
       \begin{cases}
            \text{Find}\ 
            (\lambda_{j},u_{j}) \in \mathbb{R} \times \operatorname*{range} \tilde{C}_{j}
            \ \text{such that}:\\
            P_{\tilde{C}_{j}}
            ( \tilde{D}_{j}^{} - \tilde{S}_{j}^{} \tilde{R}_{j}^{} A \tilde{R}_{j}^{*} )^{*}
            ( \tilde{R}_{j}^{} C \tilde{R}_{j}^{*} )
            ( \tilde{D}_{j}^{} - \tilde{S}_{j}^{} \tilde{R}_{j}^{} A \tilde{R}_{j}^{*} )^{}
            P_{\tilde{C}_{j}}
            u_{j}
            = \lambda_{j} \tilde{C}_{j} u_{j}.
       \end{cases} 
    \end{equation}
\end{definition}
Since \(C\) is assumed to be Hermitian positive definite, for each subdomain $j$ the matrix
featured in the left-hand-side of~\eqref{eq:gevp_generic}, namely
\(
    ( \tilde{D}_{j}^{} - \tilde{S}_{j}^{} \tilde{R}_{j}^{} A \tilde{R}_{j}^{*} )^{*}
    ( \tilde{R}_{j}^{} C \tilde{R}_{j}^{*} )
    ( \tilde{D}_{j}^{} - \tilde{S}_{j}^{} \tilde{R}_{j}^{} A \tilde{R}_{j}^{*} )^{}
\),
is Hermitian positive semi-definite.
While the problems~\eqref{eq:gevp_generic} are posed on the extended
decomposition, it is possible to equivalently express the local generalized
eigenvalue problems in a form that can be rewritten using the initial
decomposition operators, for \(j=1,\dots,J\):
\begin{equation}\label{eq:gevp_generic_smaller_decomposition}
   \begin{cases}
        \text{Find}\ 
        (\lambda_{j},u_{j}) \in \mathbb{R} \times \operatorname*{range} \tilde{C}_{j}
        \ \text{such that}:\\
        P_{\tilde{C}_{j}}
        \tilde{R}_{j}^{} 
        ( {D}_{j}^{} {R}_{j}^{} - {S}_{j}^{} {R}_{j}^{} A )^{*}
        ( {R}_{j}^{} C {R}_{j}^{*} )
        ( {D}_{j}^{} {R}_{j}^{} - {S}_{j}^{} {R}_{j}^{} A )^{}
        \tilde{R}_{j}^{*}
        P_{\tilde{C}_{j}}
        u_{j}
        = \lambda_{j} \tilde{C}_{j} u_{j}.
   \end{cases} 
\end{equation}
If the local solvers take the form \({S}_{j} = {D}_j {B}_{j}^{-1}\)
for some invertible \({B}_{j}\) (e.g.\ \({B}_{j} := R_{j}^{} A  R_{j}^{*}\),
which is a common choice used for instance with RAS preconditioners, see
Section~\ref{sec:ras} for more details), then the local generalized eigenvalue
problems are, for \(j=1,\dots,J\):
\begin{equation}
   \begin{cases}
        \text{Find}\ 
        (\lambda_{j},u_{j}) \in \mathbb{R} \times \operatorname*{range} \tilde{C}_{j}
        \ \text{such that}:\\
        P_{\tilde{C}_{j}}
        \tilde{R}_{j}^{} 
        ( {R}_{j}^{} - {B}_{j}^{-1} {R}_{j}^{} A )^{*}
        ( {D}_{j}^{} {R}_{j}^{} C {R}_{j}^{*} {D}_{j}^{} )
        ( {R}_{j}^{} - {B}_{j}^{-1} {R}_{j}^{} A )^{}
        \tilde{R}_{j}^{*} 
        P_{\tilde{C}_{j}}
        u_{j}
        = \lambda_{j} \tilde{C}_{j} u_{j}.
   \end{cases} 
\end{equation}

\subsection{Extended GenEO coarse space}\label{sec:new_cs}

Based on the analysis of the one-level method described above, we are able to
design a coarse space such that the norm of the propagation error of the
corresponding two-level method is smaller than one.

The following definition is motivated by Remark~\ref{rem:eigenproblem_kernels}.
\begin{definition}\label{def:Yj}
    Let \(Y_{j}\) denote the orthogonal (for the Euclidean inner product) complementary in
    \( \ker \tilde{C}_{j} \)
    of
    \(
        \ker \tilde{C}_{j}
        \cap
        \ker \tilde{L}_{j}^{*} \tilde{R}_{j}^{} C \tilde{R}_{j}^{*} \tilde{L}_{j}^{}
    \).
\end{definition}
This space is such that the following orthogonal decomposition holds
\begin{equation}
    \ker \tilde{C}_{j}
    =
    \left(
        \ker \tilde{C}_{j}
        \cap
        \ker \tilde{L}_{j}^{*} \tilde{R}_{j}^{} C \tilde{R}_{j}^{*} \tilde{L}_{j}^{}
    \right)
    \oplus_{\perp}
    Y_{j},
    \qquad j=1,\dots,J.
\end{equation}
\begin{definition}
    \label{def:newCS}
    Let \(\tau > 0\) be a user-specified parameter.
    Based on the generalized eigenvalue problems~\eqref{eq:gevp_generic}, we
    define for each subdomain $j=1,\dots,J$ the local spaces $U_{j}$:
    \begin{equation}
        U_{j} :=
        Y_{j} \bigoplus
        \operatorname*{span}
        \left\{
            u_{j} \;|\;
            (\lambda_{j}, u_{j}) \ \text{solution of~\eqref{eq:gevp_generic} with }
            \lambda_{j} > \tau
        \right\}.
    \end{equation}
    Then, the global coarse space is defined as the collection of all the
    local contributions from a subdomain \(j\) to which the operator $
    \tilde{R}_{j}^{*} \tilde{L}_{j}^{}$ is applied:
    \begin{equation}
        Z :=
        \bigoplus_{j=1}^{J}
        \tilde{R}_{j}^{*} \tilde{L}_{j}^{} U_{j} =
        \bigoplus_{j=1}^{J}
        \tilde{R}_{j}^{*}
        ( \tilde{D}_{j}^{} - \tilde{S}_{j}^{} \tilde{R}_{j}^{} A \tilde{R}_{j}^{*} )^{}
        U_{j}.
    \end{equation}
\end{definition}
We introduce the projections \(\Pi_{j}\) which are key components in the
analysis of the coarse space but which are not used in practice to define the
preconditioner itself.
\begin{definition}\label{def:Pii}
    Let \(\Pi_{j}\) be the projection on \(U_{j}\) parallel to 
    \begin{equation}
        \left(
            \ker \tilde{C}_{j}
            \cap
            \ker \tilde{L}_{j}^{*} \tilde{R}_{j}^{} C \tilde{R}_{j}^{*} \tilde{L}_{j}^{}
        \right)
        \bigoplus
        \operatorname*{span}
        \left\{
            u_{j} \;|\;
            (\lambda_{j}, u_{j}) \ \text{solution of~\eqref{eq:gevp_generic} with }
            \lambda_{j} \leq \tau
        \right\}.
    \end{equation}
\end{definition}
Following~\cite[Lem.~2.3]{alDaas:2019:class}, we have,
for any \(u_{j} \in \mathbb{C}^{\# \tilde{\mathcal{N}}_{j}}\)
\begin{equation}\label{eq:ev_tau_estimate}
    \| \tilde{R}_{j}^{*} \tilde{L}_{j}^{} (I - \Pi_{j}) u_{j} \|_{C}^{2}
    \leq \tau
    \left( \tilde{C}_{j} u_{j}, u_{j} \right),
    \qquad j=1,\dots,J.
\end{equation}
We introduce the matrix \(R_{0}\), such that the columns of \(R_{0}^{*}\) form a basis of \(Z\)
and make the following assumption.

\begin{assumption}\label{hyp:R0AR0_invertible}
    We assume in the following that \(R_{0}^{} A R_{0}^{*}\) is invertible.
\end{assumption}

The Assumption~\ref{hyp:R0AR0_invertible} is satisfied for instance if the
symmetric part of \(A\), namely \(\frac{A+A^{*}}{2}\) is SPD.
Thanks to Assumption~\ref{hyp:R0AR0_invertible} we let:
\begin{equation}
    \label{eq:M0m1}
    M_{0}^{-1} := R_{0}^{*} (R_{0}^{} A R_{0}^{*})^{-1} R_{0}^{}.
\end{equation}
The operator \(M_{0}^{-1} A\) is then a projection with:
\begin{equation}
    \operatorname*{range} (M_{0}^{-1} A) = Z.
\end{equation}

\subsection{Two-level method analysis}

We consider a two-level preconditioner \(M^{-1}\) constructed by adding a
multiplicative global coarse correction to the first-level residual propagation
operator.
More precisely, the full preconditioner is defined by
\begin{equation}\label{eq:full_preconditioner}
    M^{-1}
    := \sum_{j=1}^{J} M_{j}^{-1}
    + M_{0}^{-1} (I - A \sum_{j=1}^{J} M_{j}^{-1})
    = M_{0}^{-1} + (I - M_{0}^{-1} A) \sum_{j=1}^{J} M_{j}^{-1}.
\end{equation}
Hence the two-level error propagation operator is given by
\begin{equation}\label{eq:full_error_op}
    I - M^{-1} A
    = I - \sum_{j=1}^{J} M_{j}^{-1} A - M_{0}^{-1} A (I - \sum_{j=1}^{J} M_{j}^{-1} A) 
    = (I - M_{0}^{-1} A) (I - \sum_{j=1}^{J} M_{j}^{-1} A).
\end{equation}

Before stating our made result we introduce and discuss the behavior of the
following quantity: let 
\begin{equation}\label{eq:M0goodpreconditioner}
    \sigma := \|I - M_0^{-1} A\|_{C}.
\end{equation}
We claim that \(\sigma\) can be controlled in the following cases:
\begin{itemize}
    \item If \(A\) is SPD, then choosing \(C=A\) immediately yields \(\sigma = 1\),
        since \(M_{0}^{-1}A\) and \(I - M_{0}^{-1}A\) are then
        complementary \(A\)-orthogonal projections.
    \item If the coarse space has the same rank as \(A\), then \(\sigma=0\). 
        Even though this situation is of no practical interest, one might
        expect that, for large enough coarse spaces, \(\sigma\) is moderate.
        Theoretical guarantees ensuring control of \(\sigma\) for Helmholtz
        problems are given for instance in~\cite[Lemma~5.1]{Galkowski2025}.
    \item If \(C\) is spectrally equivalent to \(A\), then
        \(\sigma\) is controlled, which is the subject of the following lemma.
\end{itemize}

\begin{lemma}\label{lem:norm_second_lvl}
    Let \(\rho \geq 0\) such that
    \begin{equation}\label{eq:C_good_A_preconditioner}
        \rho := \|I - C^{-1}A\|_{C}.
    \end{equation}
    Assume that \(\rho<1\), then 
    \begin{equation}\label{eq:norm_second_lvl}
         \|I - M_{0}^{-1} A\|_C \leq \frac{1}{1-\rho}.
    \end{equation}
    
\end{lemma}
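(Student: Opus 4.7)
The plan is to bound $\|I - M_0^{-1}A\|_C$ by comparing the oblique (Galerkin) projection $T := M_0^{-1}A$ onto $Z$ with the $C$-orthogonal projection $\Pi := R_0^{*}(R_0 C R_0^{*})^{-1}R_0 C$ onto the same subspace $Z$ (well-defined since $C$ is HPD). Both $T$ and $\Pi$ have range $Z$ and act as the identity on $Z$, which immediately yields the two identities $T\Pi = \Pi$ and $\Pi T = T$, and hence $(I-\Pi)(I-T) = I - T - \Pi + \Pi T = I - \Pi$. Applied to an arbitrary $u$, this shows that $v := (I-T)u$ satisfies $(I-\Pi)v = (I-\Pi)u$, so in particular $\|(I-\Pi)v\|_C \leq \|u\|_C$ since $\Pi$ is a $C$-orthogonal projection.

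The remaining work is to control $\|\Pi v\|_C$ using the assumption $\rho < 1$. Because $T$ is the Galerkin projection onto $Z = \operatorname{range}(R_0^{*})$, the vector $v$ satisfies $R_0 A v = 0$. Writing $A = C - CE$ with $E := I - C^{-1}A$ (so that $\|E\|_C = \rho$), this orthogonality rearranges to $R_0 C v = R_0 C E v$, and therefore $\Pi v = R_0^{*}(R_0 C R_0^{*})^{-1} R_0 C v = R_0^{*}(R_0 C R_0^{*})^{-1} R_0 C (Ev) = \Pi(Ev)$. Since $\Pi$ is a $C$-orthogonal projection, $\|\Pi v\|_C = \|\Pi(Ev)\|_C \leq \|Ev\|_C \leq \rho \|v\|_C$.

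To conclude, the $C$-orthogonal Pythagorean identity gives $\|v\|_C^{2} = \|\Pi v\|_C^{2} + \|(I-\Pi)v\|_C^{2} \leq \rho^{2}\|v\|_C^{2} + \|u\|_C^{2}$, so that $(1-\rho^{2})\|v\|_C^{2} \leq \|u\|_C^{2}$ and $\|v\|_C \leq (1-\rho^{2})^{-1/2}\|u\|_C \leq (1-\rho)^{-1}\|u\|_C$, using $\sqrt{1-\rho^{2}} \geq 1-\rho$ on $[0,1)$. Taking the supremum over $u$ yields \eqref{eq:norm_second_lvl}. The only step that is more than bookkeeping with projections is the identity $\Pi v = \Pi E v$; it is the algebraic bridge connecting the Galerkin orthogonality built into $T$ to the $C$-orthogonality built into $\Pi$, and without the splitting $A = C - CE$ that uses $\rho$ explicitly it would not be clear how to bring the hypothesis into play.
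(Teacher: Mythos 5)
Your proof is correct, and at its core it contains the same identity the paper uses, namely $I - M_0^{-1}A = (I-P_C) + P_C(I-C^{-1}A)(I-M_0^{-1}A)$: your two facts $(I-\Pi)(I-T)=I-\Pi$ and $\Pi v = \Pi E v$ combine, via $v = (I-\Pi)v + \Pi v$, into exactly this operator equation. You arrive at it differently --- through projection calculus and the Galerkin orthogonality $R_0 A v = 0$ together with the splitting $A = C - CE$, rather than the paper's direct manipulation of the $R_0^*(\cdot)^{-1}R_0$ blocks --- which is arguably more transparent about why the hypothesis enters. Where you genuinely diverge is in how the bound is extracted: the paper applies the triangle inequality to the identity and solves the resulting self-referential estimate $x \le 1 + \rho x$, whereas you exploit the $C$-orthogonality of the decomposition $v = \Pi v + (I-\Pi)v$ through the Pythagorean identity. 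This buys you the sharper constant $\|I-M_0^{-1}A\|_C \le (1-\rho^2)^{-1/2}$, which implies the stated bound since $\sqrt{1-\rho^2}=\sqrt{(1-\rho)(1+\rho)} \ge 1-\rho$ on $[0,1)$. All intermediate steps check out: $T$ and $\Pi$ are both projections with range $Z$ restricting to the identity there, $R_0 C R_0^*$ is invertible because $C$ is HPD and the columns of $R_0^*$ form a basis of $Z$, and $\|\Pi\|_C = \|I-\Pi\|_C = 1$.
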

\begin{proof}
    Letting $P_C$ denote the $C$-orthogonal projection onto the coarse space \(Z\), 
    namely \(P_C:= R_{0}^{*} (R_{0}^{} C R_{0}^{*})^{-1} R_{0}^{} C\),
    we write first the following decomposition of the coarse space operator
    \begin{align}\label{eq:lem_norm_second_lvl_first_eq}
        I - M_{0}^{-1} A
        &= (I - P_{C})
        + P_{C}(I - C^{-1}A)
        + (P_{C}C^{-1}A - M_{0}^{-1} A).
    \end{align}
    We focus on the third term in the right-hand-side, we have
    \begin{align}
        P_{C}C^{-1}A - M_{0}^{-1} A
        &= R_{0}^{*} (R_{0}^{} C R_{0}^{*})^{-1} R_{0}^{} A - R_{0}^{*} (R_{0}^{} A R_{0}^{*})^{-1} R_{0}^{} A \\
        &= R_{0}^{*} \left[ (R_{0}^{} C R_{0}^{*})^{-1}  (R_{0}^{} A R_{0}^{*} - R_{0}^{} C R_{0}^{*})  (R_{0}^{} A R_{0}^{*})^{-1} \right] R_{0}^{} A \\
        &= R_{0}^{*} (R_{0}^{} C R_{0}^{*})^{-1}  R_{0}^{}C (C^{-1} A - I) R_{0}^{*} (R_{0}^{} A R_{0}^{*})^{-1} R_{0}^{} A\\
        &= -P_{C} (I - C^{-1} A) M_0^{-1} A.
    \end{align}
    Plugging in~\eqref{eq:lem_norm_second_lvl_first_eq}, we obtain
    \begin{align}
        I - M_{0}^{-1} A
        = (I - P_{C}) + P_{C}(I - C^{-1}A) (I - M_0^{-1} A).
    \end{align}
    The claimed result then follows, since,
    using that \(\|P_{C}\|_{C} = \|I - P_{C}\|_{C} = 1\), we have
    \begin{align}
        \|I - M_{0}^{-1} A\|_{C}
        &\leq \|I - P_{C}\|_{C} + \|P_{C}\|_{C} \|I - C^{-1}A\|_{C} \|I - M_0^{-1} A\|_{C}\\
        &\leq 1 + \rho \|I - M_0^{-1} A\|_{C}.
    \end{align}
\end{proof}

The quantity \(\rho\) defined in~\eqref{eq:C_good_A_preconditioner}
measures how good \(C^{-1}\) is a preconditioner for \(A\).
This is a question strictly related to the problem under consideration and in
particular \(\rho\) does not depend on the domain decomposition.
Note that if \(A\) is SPD and \(C=A\) then \(\rho=0\).

We are now ready to state the main result of the paper.

\begin{theorem}
    Under Assumption~\ref{hyp:R0AR0_invertible}, let $M_{0}^{-1}$ be defined
    by~\eqref{eq:M0m1} for the coarse space of Definition~\ref{def:newCS} for
    some user-chosen parameter $\tau$.
    Let \(M^{-1}\) defined in~\eqref{eq:full_preconditioner} denote the
    corresponding two-level preconditioner.

    Then, we have the following estimate:
        \begin{equation}\label{eq:op_norm_estimate}
            \| I - M_{}^{-1} A \|_{C} \leq \sigma\sqrt{k_{0} k_{1} \tau},
        \end{equation}
    where $\sigma$ is defined by~\eqref{eq:M0goodpreconditioner}, $k_0$ by
    Definition~\ref{def:k0} and $k_1$ by Assumption~\ref{ass:k1}. 
\end{theorem}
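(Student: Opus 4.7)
The plan is to combine the factorization~\eqref{eq:full_error_op} with the coarse-space annihilation property of $I - M_0^{-1}A$, and then reduce to purely local estimates via the $k_0$-bound~\eqref{eq:k0}, the SPSD splitting~\eqref{eq:spsd-splitting}, and the per-subdomain $\tau$-estimate~\eqref{eq:ev_tau_estimate}. The starting point is the factorization of the error operator together with the new expression of the one-level residual, i.e.
\begin{equation*}
(I - M^{-1}A) u = (I - M_0^{-1}A) \sum_{j=1}^{J} \tilde{R}_{j}^{*} \tilde{L}_{j}^{} \tilde{R}_{j}^{} u,
\end{equation*}
combined with the key observation that, thanks to Assumption~\ref{hyp:R0AR0_invertible} and definition~\eqref{eq:M0m1}, $M_0^{-1}A$ is a projection with range $Z$; in particular $(I - M_0^{-1}A)$ vanishes identically on $Z$.

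For an arbitrary $u \in \mathbb{C}^{\#\mathcal{N}}$, I would split $\tilde{R}_j u = \Pi_j \tilde{R}_j u + (I - \Pi_j)\tilde{R}_j u$. Since by Definition~\ref{def:Pii} we have $\Pi_j \tilde{R}_j u \in U_j$, it follows from Definition~\ref{def:newCS} that $z := \sum_{j=1}^{J} \tilde{R}_j^* \tilde{L}_j \Pi_j \tilde{R}_j u$ lies in $Z$. Subtracting this $z$ inside the $(I - M_0^{-1}A)$ factor is therefore free, and yields
\begin{equation*}
(I - M^{-1}A) u = (I - M_0^{-1}A) \sum_{j=1}^{J} \tilde{R}_{j}^{*} \tilde{L}_{j}^{} (I - \Pi_j) \tilde{R}_{j}^{} u.
\end{equation*}
Taking $C$-norms and invoking the definition of $\sigma$ gives $\|(I - M^{-1}A) u\|_C \le \sigma \bigl\|\sum_{j=1}^J \tilde{R}_j^* \tilde{L}_j (I-\Pi_j)\tilde{R}_j u\bigr\|_C$.

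It remains to estimate the right-hand side. Applying~\eqref{eq:k0} to the locally supported family $\tilde{L}_j (I - \Pi_j)\tilde{R}_j u$ (the overlap-counting argument behind~\eqref{eq:k0} only uses the sparsity pattern of $C$ with respect to the extended decomposition and holds verbatim for any such family), then~\eqref{eq:ev_tau_estimate} with $u_j = \tilde{R}_j u$ on each summand, and finally Assumption~\ref{ass:k1}, produces
\begin{equation*}
\Bigl\|\sum_{j=1}^J \tilde{R}_j^* \tilde{L}_j (I-\Pi_j)\tilde{R}_j u\Bigr\|_C^2 \le k_0 \sum_{j=1}^J \tau (\tilde{C}_j \tilde{R}_j u, \tilde{R}_j u) \le k_0 k_1 \tau \|u\|_C^2.
\end{equation*}
Chaining the inequalities yields $\|(I - M^{-1}A) u\|_C^2 \le \sigma^2 k_0 k_1 \tau \|u\|_C^2$ for every $u$, which is~\eqref{eq:op_norm_estimate}.

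The only nonroutine step is justifying the subtraction of $z$. It relies on two facts to be isolated cleanly: first, that $M_0^{-1}A$ is a genuine projection onto $Z$, which is immediate from Assumption~\ref{hyp:R0AR0_invertible} and~\eqref{eq:M0m1}; second, that $\Pi_j \tilde{R}_j u \in U_j$, so that $\tilde{R}_j^*\tilde{L}_j \Pi_j \tilde{R}_j u$ lies in $Z$ by the very definition of $Z$. Both are built into Definitions~\ref{def:newCS} and~\ref{def:Pii}, so no real obstacle arises: all the analytical content has been front-loaded into the construction of the coarse space and into~\eqref{eq:ev_tau_estimate}.
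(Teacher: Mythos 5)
Your proposal is correct and follows essentially the same route as the paper: factor the two-level error operator, split the one-level sum via the projections $\Pi_j$, absorb the $\Pi_j$-part into $Z=\ker(I-M_0^{-1}A)$, and then chain~\eqref{eq:k0}, \eqref{eq:ev_tau_estimate} with $u_j=\tilde{R}_j u$, and~\eqref{eq:spsd-splitting}. No gaps; the justification of the subtraction of $z$ is exactly the paper's observation that $\operatorname{range}(\sum_j \tilde{R}_j^*\tilde{L}_j\Pi_j\tilde{R}_j)\subset Z$.
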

\begin{proof}
    We need to estimate the \(C\)-norm of the two-level error propagation
    operator, namely
    \begin{equation}
        \| I - M_{}^{-1} A \|_{C} 
        := \max_{\substack{u \in \mathbb{C}^{\#\mathcal{N}}\\u \neq 0}} \frac{
            \| (I - M_{}^{-1} A) u \|_{C}
        }{
            \|u\|_{C}
        }.
    \end{equation}
    To study this quantity we rewrite first the one-level error propagation operator
    using the local projection operators
    \begin{align}
        I - \sum_{j=1}^{J} M_{j}^{-1} A
        = \sum_{j=1}^{J} \tilde{R}_{j}^{*} \tilde{L}_{j}^{} \tilde{R}_{j}^{}
        = \sum_{j=1}^{J} \tilde{R}_{j}^{*} \tilde{L}_{j}^{} (I-\Pi_{j}) \tilde{R}_{j}^{}
        + \sum_{j=1}^{J} \tilde{R}_{j}^{*} \tilde{L}_{j}^{} \Pi_{j} \tilde{R}_{j}^{}.
    \end{align}
    By definition of the coarse space \(Z\) and the projections \(\Pi_{j}\),
    \begin{equation}
        \operatorname*{range}(\sum_{j=1}^{J} \tilde{R}_{j}^{*} \tilde{L}_{j}^{} \Pi_{j} \tilde{R}_{j}^{})
        \subset Z = \ker (I - M_{0}^{-1} A).
    \end{equation}
    It follows that the two-level error propagation
    operator~\eqref{eq:full_error_op} can be rewritten as
    \begin{align}
        I - M_{}^{-1} A
        & = (I - M_{0}^{-1} A) (I - \sum_{j=1}^{J} M_{j}^{-1} A)
        = (I - M_{0}^{-1} A)
        (\sum_{j=1}^{J} \tilde{R}_{j}^{*} \tilde{L}_{j}^{} (I-\Pi_{j}) \tilde{R}_{j}^{}).
    \end{align}
    By definition~\eqref{eq:M0goodpreconditioner} of $\sigma$, we have:
    \begin{align}
        \| I - M_{}^{-1} A \|_{C}
        &\leq \| (I - M_{0}^{-1} A)\|_C \| \sum_{j=1}^{J} \tilde{R}_{j}^{*} \tilde{L}_{j}^{} (I-\Pi_{j}) \tilde{R}_{j}^{} \|_{C}\\
        &\leq \sigma \;
        \max_{\substack{u \in \mathbb{C}^{\#\mathcal{N}}\\u \neq 0}} \frac{
            \| \sum_{j=1}^{J} \tilde{R}_{j}^{*} \tilde{L}_{j}^{} (I-\Pi_{j}) \tilde{R}_{j}^{} u \|_{C}       
        }{
            \| u \|_{C}
        }.
    \end{align}
    The claimed result is now essentially obtained by repeating the analysis
    that was performed for the one-level method:
    for any \(u \in \mathbb{C}^{\#\mathcal{N}}\), we have
    \begin{align}
        \| \sum_{j=1}^{J} \tilde{R}_{j}^{*} \tilde{L}_{j}^{} (I-\Pi_{j}) \tilde{R}_{j}^{} u \|_{C}^{2}
        & \underset{\eqref{eq:k0}}{\leq} k_{0}
        \sum_{j=1}^{J} \| \tilde{R}_{j}^{*} \tilde{L}_{j}^{} (I-\Pi_{j}) \tilde{R}_{j}^{} u \|_{C}^{2}\\
        & \underset{\eqref{eq:ev_tau_estimate}}{\leq} k_{0} \tau
        \sum_{j=1}^{J} \left( \tilde{C}_{j} \tilde{R}_{j}^{} u, \tilde{R}_{j}^{} u \right)
        \underset{\eqref{eq:spsd-splitting}}{\leq} k_{0} k_{1} \tau
        \|  u \|_{C}^{2}.
    \end{align}
 \end{proof}

\begin{corollary}
    Assuming that \(\tau\) is chosen sufficiently small such that
    \begin{equation}
        \sigma \sqrt{k_{0} k_{1} \tau} < 1,
    \end{equation}
    then, for any initial guess \(u^{0} \in \mathbb{C}^{\# \mathcal{N}}\), the
    fixed point algorithm
    \begin{equation}
      u^{n+1} := u^{n} + M^{-1}(f-A\,u^{n}),
      \qquad n \in \mathbb{N},
    \end{equation}
    converges in the \(C\)-norm to the solution of the linear
    system~\eqref{eq:syslin} with a convergence rate of at most
    \(\sigma\sqrt{k_{0} k_{1} \tau}\).

    Then, also, the preconditioned operator \(M^{-1}A\) (where \(M^{-1}\) is
    the two-level preconditioner defined by~\eqref{eq:full_preconditioner}) is
    coercive
    \begin{equation}\label{eq:op_coercivity_estimate}
        \left( M_{}^{-1} Au, u \right)_{C} \geq
        \left[1-\sigma\sqrt{k_{0} k_{1} \tau}\right] \| u \|_{C}^{2},
        \qquad \forall u \in \mathbb{C}^{\#\mathcal{N}}.
    \end{equation}
\end{corollary}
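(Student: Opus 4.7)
The plan is to derive both conclusions as direct consequences of the operator-norm bound $\|I - M^{-1}A\|_{C} \leq \sigma\sqrt{k_{0} k_{1} \tau}$ established in the preceding theorem, combined with the standing hypothesis $\sigma\sqrt{k_{0} k_{1} \tau}<1$.

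For the convergence statement, I would introduce the iteration error $e^{n} := u^{n} - u^{\star}$, where $u^{\star}=A^{-1}f$ is the exact solution. Subtracting the fixed-point identity $u^{\star} = u^{\star} + M^{-1}(f - A u^{\star})$ from the defining recursion yields the clean linear recursion $e^{n+1} = (I - M^{-1}A)\,e^{n}$. Iterating and applying the theorem gives $\|e^{n}\|_{C} \leq \bigl(\sigma\sqrt{k_{0} k_{1} \tau}\bigr)^{n} \|e^{0}\|_{C}$, which establishes both convergence (since the ratio is $<1$ by assumption) and the geometric rate.

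For the coercivity statement, the key step is to split $M^{-1}A = I - (I - M^{-1}A)$, so that for every $u \in \mathbb{C}^{\#\mathcal{N}}$ one has $(M^{-1} A u, u)_{C} = \|u\|_{C}^{2} - \bigl((I - M^{-1}A)u, u\bigr)_{C}$. A Cauchy--Schwarz estimate in the $C$-inner product bounds the modulus of the second term by $\|(I - M^{-1}A)u\|_{C}\,\|u\|_{C} \leq \sigma\sqrt{k_{0} k_{1} \tau}\,\|u\|_{C}^{2}$, which yields the claimed lower bound. Since the ordering $\geq$ is meaningful only for real quantities, the inequality should be read as a bound on $\operatorname{Re}(M^{-1}Au,u)_{C}$; the estimate just obtained controls precisely this real part.

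There is no serious obstacle here: both assertions are standard corollaries of a contractive bound on the error propagation operator. The only subtle point worth flagging is the interpretation of the coercivity inequality in the complex-valued setting as a lower bound on the real part of the underlying sesquilinear form, which is the usual convention in this context.
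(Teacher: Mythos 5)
Your proposal is correct and follows essentially the same route as the paper: the convergence claim is the standard geometric decay of the error recursion $e^{n+1}=(I-M^{-1}A)e^{n}$ (which the paper dismisses as ``clear''), and the coercivity bound uses exactly the same splitting $(M^{-1}Au,u)_{C}=\|u\|_{C}^{2}-\bigl((I-M^{-1}A)u,u\bigr)_{C}$ followed by Cauchy--Schwarz and the operator-norm estimate. Your added remark that the inequality should be read on the real part of the sesquilinear form in the complex setting is a sensible clarification that the paper leaves implicit.
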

\begin{proof}
    The convergence of the fixed point algorithm is clear.
    As for the coercivity, by the triangular and Cauchy-Schwarz inequalities,
    we have, for any \(u \in \mathbb{C}^{\#\mathcal{N}}\):
   \begin{align}
       \left( M_{}^{-1} Au, u \right)_{C}
       &= \|u\|_{C}^{2} - \left(u, (I-M_{}^{-1} A)u\right)_{C}
       \ge \|u\|_{C}^{2} - \|u\|_{C} \|(I-M_{}^{-1} A)u\|_{C}\\
       &\ge [1-\sigma\sqrt{k_{0} k_{1} \tau}]\, \|u\|_{C}^{2}.  
   \end{align}
\end{proof}

\section{Practical methods}\label{sec:practical_methods}

We investigate how some particular choices of local solvers in
the first-level preconditioner influence the structure of the local generalized
eigenvalue problems and as a result the associated coarse space. 
Considering first a RAS preconditioner, we show in~\S~\ref{sec:ras_exact} that
with exact local solvers ($B_{j}:=R_{j}^{} A R_{j}^{*}$), the contribution of
the subdomains to the coarse space is made of local \(A\)-harmonic vectors
multiplied by the partition of unity and extended by zero,
see~\eqref{eq:csrasexactsolves}.
In~\S~\ref{sec:ras_harmonic}, we show that if the matrices $C$ and
$\tilde{C}_{j}$ are defined from the matrix $A$, then the Extended GenEO
eigenvectors of Definition~\ref{def:extendedGenEO} are themselves discrete
\(A\)-harmonic vectors by construction, hence the eigenproblems can be solved
in a smaller space.
The form of the local generalized eigenvalue problems and the coarse spaces for
the AS method and a SORAS preconditioner are then detailed in \S~\ref{sec:as}
and \S~\ref{sec:soras} respectively.

\subsection{Restricted Additive Schwarz}\label{sec:ras}

\subsubsection{RAS with inexact solvers}\label{sec:ras_inexact}

A key feature of our abstract analysis is that it covers non-symmetric
preconditioners, which is the case of the Restricted Additive Schwarz (RAS)
method corresponding to the local solvers~\eqref{eq:ras}, namely
\( S_{j}^{} := D_{j}^{} B_{j}^{-1}\).
The extended local solvers \(\tilde{S}_{j}\) are then, for \(j=1,\dots,J\):
\begin{equation}
    \tilde{S}_{j}
    := Q_{j}^{*} S_{j}^{} Q_{j}
    = Q_{j}^{*} D_{j}^{} B_{j}^{-1} Q_{j}
    = Q_{j}^{*} D_{j}^{} Q_{j}^{} Q_{j}^{*} B_{j}^{-1} Q_{j}
    = \tilde{D}_{j}^{} Q_{j}^{*} B_{j}^{-1} Q_{j}.
\end{equation}
We therefore obtain, for \(j=1,\dots,J\):
\begin{equation}
    \tilde{L}_{j} := 
    \tilde{D}_{j}^{} - \tilde{S}_{j}^{} \tilde{R}_{j}^{} A \tilde{R}_{j}^{*} 
    = \tilde{D}_{j}^{} (\tilde{I}_{j} - Q_{j}^{*} B_{j}^{-1} Q_{j}^{} \tilde{R}_{j}^{} A \tilde{R}_{j}^{*}).
\end{equation}
where \(\tilde{I}_j \in \mathbb{R}^{\# {\tilde{\mathcal{N}}_j}\times
\#\tilde{\mathcal{N}}_{j}}\) is the identity operator on the subdomain defined
by \(\tilde{\mathcal{N}}_{j}\).
The local generalized eigenvalue problems~\eqref{eq:gevp_generic} take in this
case the particular form of, for \(j=1,\dots,J\),
\begin{equation}\label{eq:gevp_ras}
   \begin{cases}
      \text{Find}\ 
      (\lambda_{j},u_{j}) \in \mathbb{R} \times
      \operatorname*{range} \tilde{C}_{j}
      \ \text{such that:}\\
      P_{\tilde{C}_{j}}
      ( \tilde{I}_{j} - Q_{j}^{*} B_{j}^{-1} Q_{j}^{} \tilde{R}_{j}^{} A \tilde{R}_{j}^{*} )^{*}
      \tilde{D}_{j}^{} ( \tilde{R}_{j}^{} C \tilde{R}_{j}^{*} ) \tilde{D}_{j}^{}
      ( \tilde{I}_{j} - Q_{j}^{*} B_{j}^{-1} Q_{j}^{} \tilde{R}_{j}^{} A \tilde{R}_{j}^{*} )^{}
      P_{\tilde{C}_{j}}
      u_{j}
      = \lambda_{j} \tilde{C}_{j} u_{j}.
   \end{cases} 
\end{equation}

\subsubsection{RAS with exact solvers yields locally harmonic eigenvectors}\label{sec:ras_exact}

We consider exact local solvers for the small decomposition \(R_{j}^{}\),
namely we set
\begin{equation}
    B_{j}^{}
    := R_{j}^{} A R_{j}^{*}
     = Q_{j}^{} \tilde{R}_{j}^{} A \tilde{R}_{j}^{*} Q_{j}^{*},
   \qquad j=1,\dots,J,
\end{equation}
and assume that $B_{j}^{}$ is invertible.

\begin{lemma}\label{lem:exactlocalsolvesHarmonic}
    We introduce the projection \(\tilde{P}_{j}^{}\):
    \begin{equation}
        \tilde{P}_{j}^{} := Q_{j}^{*}
        (Q_{j}^{} \tilde{R}_{j}^{} A \tilde{R}_{j}^{*} Q_{j}^{*})^{-1}
        Q_{j}^{} \tilde{R}_{j}^{} A \tilde{R}_{j}^{*},
        \qquad j=1,\dots,J.
    \end{equation}
    Then, 
    \(
        \operatorname*{range}(\tilde{I}_{j} -
        \tilde{P}_{j})=\operatorname*{ker}(\tilde{P}_{j})
    \)
    is a space of discrete $A$-harmonic vectors
\end{lemma}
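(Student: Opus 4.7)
The plan is to first verify that $\tilde{P}_{j}$ is a projection (which yields the equality $\operatorname{range}(\tilde{I}_{j} - \tilde{P}_{j}) = \ker(\tilde{P}_{j})$ as a general fact), then characterize its kernel explicitly, and finally identify the characterization as the discrete $A$-harmonicity condition.

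First I would check idempotence: plugging $\tilde{P}_{j}$ into itself, the factor $\tilde{R}_{j}^{*} Q_{j}^{*}$ meets $(Q_{j}^{} \tilde{R}_{j}^{} A \tilde{R}_{j}^{*} Q_{j}^{*})^{-1} Q_{j}^{} \tilde{R}_{j}^{} A$ in the middle, and by invertibility of $B_{j}^{} = Q_{j}^{} \tilde{R}_{j}^{} A \tilde{R}_{j}^{*} Q_{j}^{*}$ (the exact local solver) the middle block simplifies to the identity, giving $\tilde{P}_{j}^{2} = \tilde{P}_{j}$. Since any projection splits the ambient space as $\operatorname{range}(I-P) = \ker(P)$, the first half of the claim follows immediately.

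Next I would characterize the kernel. A vector $\tilde{v} \in \mathbb{C}^{\#\tilde{\mathcal{N}}_{j}}$ satisfies $\tilde{P}_{j} \tilde{v} = 0$ iff $Q_{j}^{*} B_{j}^{-1} Q_{j}^{} \tilde{R}_{j}^{} A \tilde{R}_{j}^{*} \tilde{v} = 0$. Left-multiplying by $Q_{j}^{}$ and using the Boolean identity $Q_{j}^{} Q_{j}^{*} = I_{j}$ together with the invertibility of $B_{j}^{}$, this is equivalent to $Q_{j}^{} \tilde{R}_{j}^{} A \tilde{R}_{j}^{*} \tilde{v} = 0$. Using the first identity of Assumption~\ref{hyp:tilde_decomposition}, $Q_{j}^{} \tilde{R}_{j}^{} = R_{j}^{}$, so the kernel is exactly
\begin{equation}
    \ker(\tilde{P}_{j}) = \{\tilde{v} \in \mathbb{C}^{\#\tilde{\mathcal{N}}_{j}} \;|\; R_{j}^{} A \tilde{R}_{j}^{*} \tilde{v} = 0\}.
\end{equation}

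Finally I would interpret this condition. The vector $\tilde{R}_{j}^{*} \tilde{v}$ is the extension of $\tilde{v}$ by zero to the full index set $\mathcal{N}$; applying $R_{j}^{}$ after $A$ selects the rows of $A(\tilde{R}_{j}^{*} \tilde{v})$ indexed by $\mathcal{N}_{j}$, which are precisely the interior degrees of freedom of the extended subdomain $\tilde{\mathcal{N}}_{j}$ (since $\mathcal{N}_{j} \subset \tilde{\mathcal{N}}_{j}$ and the layer $\tilde{\mathcal{N}}_{j} \setminus \mathcal{N}_{j}$ plays the role of the overlap/boundary). The condition $R_{j}^{} A \tilde{R}_{j}^{*} \tilde{v} = 0$ therefore states that $\tilde{v}$ satisfies the discrete equation on all interior degrees of freedom of $\tilde{\mathcal{N}}_{j}$, which is precisely the definition of a discrete $A$-harmonic vector on the extended subdomain. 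No step here is truly delicate; the only point requiring care is invoking Assumption~\ref{hyp:tilde_decomposition} at the right moment to rewrite $Q_{j}^{} \tilde{R}_{j}^{}$ as $R_{j}^{}$, so that the kernel receives its clean, geometry-level interpretation.
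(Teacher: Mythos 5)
Your proof is correct and follows essentially the same route as the paper's: both reduce membership in $\ker(\tilde{P}_{j})$ to the condition $Q_{j}^{}\tilde{R}_{j}^{}A\tilde{R}_{j}^{*}\tilde{v}=0$ (the paper by left-multiplying $\tilde{P}_{j}\tilde{v}=0$ with $Q_{j}^{}\tilde{R}_{j}^{}A\tilde{R}_{j}^{*}$, you by left-multiplying with $Q_{j}^{}$ and invoking $Q_{j}^{}Q_{j}^{*}=I_{j}$ and the invertibility of $B_{j}$), and then read this off as discrete $A$-harmonicity on $\mathcal{N}_{j}$. You are in fact slightly more complete than the paper, since you explicitly verify the idempotence of $\tilde{P}_{j}$ to justify $\operatorname{range}(\tilde{I}_{j}-\tilde{P}_{j})=\ker(\tilde{P}_{j})$, a step the paper leaves implicit.
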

\begin{proof}
    Consider \(\tilde{u}_{j}\) in the kernel of \(\tilde{P}_{j}\).
    If we left-multiply the equation \(\tilde{P}_{j}\tilde{u}_{j}=0\) by the
    operator \(Q_{j}^{} \tilde{R}_{j}^{} A \tilde{R}_{j}^{*}\), we obtain that
    \(Q_{j}^{} \tilde{R}_{j}^{} A \tilde{R}_{j}^{*}\,\tilde{u}_{j}=0\), that is
    to say that \(\tilde{R}_{j}^{} A \tilde{R}_{j}^{*}\,\tilde{u}_{j}\) is zero
    in the subdomain \(\mathcal{N}_{j}\).
    Thus, we can say that a vector in the kernel of the projection
    \(\tilde{P}_{j}\) is a discrete \(A\)-harmonic function. 
\end{proof}

Since we have
\(
    \tilde{L}_{j} = \tilde{D}_{j}^{} (\tilde{I}_{j} - \tilde{P}_{j}^{})
\),
\(
    j=1,\dots,J,
\)
the local generalized eigenvalue problems~\eqref{eq:gevp} reads:
\begin{equation}\label{eq:gevp_ras_proj}
   \begin{cases}
        \text{Find}\ 
        (\lambda_{j},u_{j}) \in \mathbb{R} \times
        \operatorname*{range} \tilde{C}_{j}^{}
        \ \text{such that}:\\
        P_{\tilde{C}_{j}}
        ( \tilde{I}_{j} - \tilde{P}_{j}^{} )^{*}
        \tilde{D}_{j}^{} ( \tilde{R}_{j}^{} C \tilde{R}_{j}^{*} ) \tilde{D}_{j}^{}
        ( \tilde{I}_{j} - \tilde{P}_{j}^{} )^{}
        P_{\tilde{C}_{j}}
        u_{j}
        = \lambda_{j} \tilde{C}_{j}^{} u_{j},
   \end{cases} 
   \quad j=1,\dots,J.
\end{equation}
The coarse space introduced in Definition~\ref{def:newCS} reads:
\begin{equation}
    \label{eq:csrasexactsolves}
    Z := \bigoplus_{j=1}^{J}
    \tilde{R}_{j}^{*} \tilde{D}_{j}^{} ( \tilde{I}_{j} - \tilde{P}_{j}^{} )
    \left[
    Y_{j}^{} \bigoplus
    \operatorname*{span}
    \left\{
        u_{j} \;|\;
        (\lambda_{j}, u_{j}) \ \text{solution of~\eqref{eq:gevp_ras_proj} with }
        \lambda_{j} > \tau
    \right\}
    \right],
\end{equation}
where \(Y_{j}^{}\) is introduced in Definition~\ref{def:Yj}.
In words, from Lemma~\ref{lem:exactlocalsolvesHarmonic}, the coarse space is
populated by \(A\)-harmonic vectors weighted by a partition of unity and then
extended by zero.

\subsubsection{RAS with exact solvers in the SPD case}\label{sec:ras_harmonic}

\begin{assumption}
    \label{ass:CequalsA}
    We choose \(C:=A\) and assume that \(A\) is SPD.
    For simplicity, we consider a SPSD splitting~\eqref{eq:spsd-splitting}
    with invertible \(\tilde{C}_{j}^{} \) so that
    \(
        \operatorname*{range} \tilde{C}_{j}^{} =
        \mathbb{C}^{\#\tilde{\mathcal{N}}_{j}}
    \)
    and thus \(P_{\tilde{C}_{j}}=\tilde{I}_{j}\).
    Finally, we assume that the identity
    $Q_j \tilde{C}_{j}^{}= Q_j \tilde{R}_{j}^{} A \tilde{R}_{j}^{*}$ holds.
\end{assumption}

In plain words, the assumption implies that the rows of \(\tilde{C}_{j}^{}\)
and of \(\tilde{R}_{j}A\tilde{R}_{j}^{*}\) are identical for the interior
points of the subdomain \(j\).
This is obviously the case if \(\tilde{C}_{j}^{}=\tilde{R}_{j}^{} A
\tilde{R}_{j}^{*}\) (see~\cite{AlDaas:2024:robust} for a choice very close to
this one) or if \(\tilde{C}_{j}^{}\) is the so-called local Neumann matrix as
in the standard GenEO preconditioner,
see~\cite{Spillane:2014:ASC,Dolean:2015:IDDSiam}.
In the former case, the construction is purely algebraic but the constant
\(k_1\) (see Definition~\ref{ass:k1}) can be quite large, whereas in the latter
case, it is necessary to know the elementary matrices (used to assemble the
matrix $A$) but then \(k_1\) is only the maximum multiplicity of the subdomain
intersections. 

Under these assumptions, 
\begin{equation}
    \tilde{P}_{j}^{} := Q_{j}^{*}
    (Q_{j}^{} \tilde{R}_{j}^{} A \tilde{R}_{j}^{*} Q_{j}^{*})^{-1}
    Q_{j}^{} \tilde{R}_{j}^{} A \tilde{R}_{j}^{*}
    = Q_{j}^{*}
    (Q_{j}^{} \tilde{C}_{j}^{} Q_{j}^{*})^{-1}
    Q_{j}^{} \tilde{C}_{j}^{}
    ,
    \qquad j=1,\dots,J,
\end{equation}
and thus the local generalized eigenvalue
problems~\eqref{eq:gevp_ras_proj} are 
\begin{equation}\label{eq:gevp_ras_proj_CANeu}
   \begin{cases}
        \text{Find}\ 
        (\lambda_{j},u_{j}) \in \mathbb{R} \times
        \mathbb{C}^{\#\tilde{\mathcal{N}}_{j}}
        \ \text{such that}:\\
        ( \tilde{I}_{j} - \tilde{P}_{j}^{} )^{*}
        \tilde{D}_{j}^{} ( \tilde{R}_{j}^{} A \tilde{R}_{j}^{*} ) \tilde{D}_{j}^{}
        ( \tilde{I}_{j} - \tilde{P}_{j}^{} )^{}
        u_{j}
        = \lambda_{j} \tilde{C}_{j}^{} u_{j},
   \end{cases} 
   \quad j=1,\dots,J.
\end{equation}
We show now that it is enough to solve the local generalized eigenvalue problems
in the local \(A\)-harmonic space, namely we show that any solution
to~\eqref{eq:gevp_ras_proj} with non-trivial eigenvalue is such that
\(u_{j} \in \operatorname*{range} (I - \tilde{P}_{j}^{})\).

Let \(u_{j}\) be an eigenvector satisfying~\eqref{eq:gevp_ras_proj_CANeu} with
associated eigenvalue \(\lambda_{j} > \tau\).
Then multiplying~\eqref{eq:gevp_ras_proj_CANeu} with \(Q_{j}^{}\) we have:
\begin{align}\label{eq:gevp_tested_with_Qt}
    \left( Q_{j}^{} 
        (\tilde{I}_{j} - \tilde{P}_{j})^{*} 
        \tilde{D}_{j}^{} ( \tilde{R}_{j}^{} A \tilde{R}_{j}^{*} ) \tilde{D}_{j}^{}
        (\tilde{I}_{j} - \tilde{P}_{j})
    \right) u_{j}
    & = \lambda_{j} Q_{j}^{} \tilde{C}_{j}^{} \tilde{P}_{j}u_{j}
    + \lambda_{j} Q_{j}^{} \tilde{C}_{j}^{} (\tilde{I}_{j} - \tilde{P}_{j}^{}) u_{j}.
\end{align}
The left-hand-side vanishes since
\begin{align*}
    \left[Q_{j}^{} (\tilde{I}_{j} - \tilde{P}_{j})^{*}\right]^{*}
    & =
    (\tilde{I}_{j} - \tilde{P}_{j}) Q_{j}^{*}
    =
    (\tilde{I}_{j} - Q_{j}^{*} (Q_{j}^{} \tilde{C}_{j}^{} Q_{j}^{*})^{-1} Q_{j}^{} \tilde{C}_{j}^{})
    Q_{j}^{*}\\
    & =
    Q_{j}^{*}
    ( \tilde{I}_{j}
    -
    (Q_{j}^{} \tilde{C}_{j}^{} Q_{j}^{*})^{-1} (Q_{j}^{} \tilde{C}_{j}^{} Q_{j}^{*}))
    = 0.
\end{align*}
Besides, multiplying the above equation with \(\tilde{C}_{j}^{}\) we get
\begin{equation}
    \tilde{C}_{j}^{} (\tilde{I}_{j} - \tilde{P}_{j}) Q_{j}^{*} = 0,
    \quad\Leftrightarrow\quad
    Q_{j}^{} \tilde{C}_{j}^{} (\tilde{I}_{j}-\tilde{P}_{j}^{}) = 0,
\end{equation}
so that the second term in the right-hand-side
of~\eqref{eq:gevp_tested_with_Qt} also vanishes.
We finally obtain
\(
    Q_{j}^{} \tilde{C}_{j}^{} \tilde{P}_{j}^{} u_{j} = 0
\).
Since 
\(
    \operatorname*{ker} (Q_{j}^{} \tilde{C}_{j}^{}) \subset
    \operatorname*{ker} (\tilde{P}_{j}) =
    \operatorname*{range} (\tilde{I}_{j} - \tilde{P}_{j})
\),
we get \( \tilde{P}_{j}^{}u_{j} \in \operatorname{range} (\tilde{I}_{j} - \tilde{P}_{j}^{})\),
which implies \(\tilde{P}_{j}^{}u_{j}=0\) hence
\(u_{j} \in \operatorname*{range} (\tilde{I}_{j} - \tilde{P}_{j})\) which by
Lemma~\ref{lem:exactlocalsolvesHarmonic} is made of $A$-harmonic functions.

Overall, we have proved the following result.
\begin{lemma}
    \label{th:harmonicgev}
    The extended GenEO eigenvalue problem of Definition~\ref{def:extendedGenEO}
    can be reformulated directly as a problem on discrete \(A\)-harmonic
    vectors (\(\operatorname*{range} (\tilde{I}_{j} - \tilde{P}_{j}^{})\)):
    \begin{equation}\label{eq:gevp_ras_proj_harmonic}
       \begin{cases}
            \text{Find}\ 
            (\lambda_{j},u_{j}) \in \mathbb{R} \times
            \operatorname*{range} (\tilde{I}_{j} - \tilde{P}_{j}^{})
            \ \text{such that}:\\
            ( \tilde{I}_{j} - \tilde{P}_{j}^{} )^{*}
            \tilde{D}_{j}^{} ( \tilde{R}_{j}^{} A \tilde{R}_{j}^{*} ) \tilde{D}_{j}^{}
            ( \tilde{I}_{j} - \tilde{P}_{j}^{} )^{}
            u_{j}
            = \lambda_{j} \tilde{C}_{j}^{} u_{j},
       \end{cases} 
       \quad j=1,\dots,J.
    \end{equation}   
\end{lemma}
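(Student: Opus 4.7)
The plan is to show that every eigenpair $(\lambda_j, u_j)$ of the problem \eqref{eq:gevp_ras_proj_CANeu} with $\lambda_j \neq 0$ must satisfy $u_j \in \operatorname*{range}(\tilde{I}_j - \tilde{P}_j)$, so that restricting the search space to discrete $A$-harmonic vectors loses no relevant information. Under Assumption~\ref{ass:CequalsA} the projector $P_{\tilde{C}_j}$ disappears, and the eigenproblem in Definition~\ref{def:extendedGenEO} simplifies to exactly \eqref{eq:gevp_ras_proj_CANeu}, so it suffices to analyze where the nontrivial eigenvectors live.

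The key algebraic step is to test \eqref{eq:gevp_ras_proj_CANeu} on the left with $Q_j$ and exploit the identity $\tilde{P}_j Q_j^* = Q_j^*$, which is a direct consequence of the definition of $\tilde{P}_j$ together with the assumption $Q_j \tilde{C}_j = Q_j \tilde{R}_j A \tilde{R}_j^*$. First I would note that this identity gives $Q_j (\tilde{I}_j - \tilde{P}_j)^* = \bigl[(\tilde{I}_j - \tilde{P}_j) Q_j^*\bigr]^* = 0$, so the entire left-hand side vanishes upon multiplication by $Q_j$. Next I would observe that the same identity, upon left-multiplication by $\tilde{C}_j$, yields $Q_j \tilde{C}_j (\tilde{I}_j - \tilde{P}_j) = 0$, so the component $Q_j \tilde{C}_j (\tilde{I}_j - \tilde{P}_j) u_j$ on the right-hand side also vanishes. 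What remains from the right-hand side is $\lambda_j Q_j \tilde{C}_j \tilde{P}_j u_j$, and for $\lambda_j \neq 0$ this forces $Q_j \tilde{C}_j \tilde{P}_j u_j = 0$.

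Finally I would conclude by combining $\operatorname{ker}(Q_j \tilde{C}_j) \subset \operatorname{ker}(\tilde{P}_j) = \operatorname*{range}(\tilde{I}_j - \tilde{P}_j)$ with the fact that $\tilde{P}_j u_j$ already belongs to $\operatorname*{range}(\tilde{P}_j)$: since $\tilde{P}_j$ is a projection, the two ranges intersect only at zero, so $\tilde{P}_j u_j = 0$ and hence $u_j \in \operatorname*{range}(\tilde{I}_j - \tilde{P}_j)$. Restricting the ansatz space of \eqref{eq:gevp_ras_proj_CANeu} to this subspace yields \eqref{eq:gevp_ras_proj_harmonic}, and by Lemma~\ref{lem:exactlocalsolvesHarmonic} this subspace consists of discrete $A$-harmonic vectors.

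The main obstacle I anticipate is bookkeeping around the partial isometry $Q_j$ and its adjoint: the critical identities $(\tilde{I}_j - \tilde{P}_j)Q_j^* = 0$ and $Q_j \tilde{C}_j (\tilde{I}_j - \tilde{P}_j) = 0$ are where Assumption~\ref{ass:CequalsA} is truly used, and one must verify that it is legitimate to replace $\tilde{R}_j A \tilde{R}_j^*$ by $\tilde{C}_j$ only after composition with $Q_j$ (rows associated with interior degrees of freedom). Once those two identities are established, the chain of implications is essentially automatic, and the equivalence of the two eigenproblems follows because restricting to $\operatorname*{range}(\tilde{I}_j - \tilde{P}_j)$ discards at most eigenpairs with $\lambda_j = 0$, which are irrelevant for the coarse space selection based on $\lambda_j > \tau$.
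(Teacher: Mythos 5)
Your proposal is correct and follows essentially the same route as the paper: multiply the eigenproblem \eqref{eq:gevp_ras_proj_CANeu} on the left by \(Q_{j}\), kill the left-hand side via \((\tilde{I}_{j}-\tilde{P}_{j})Q_{j}^{*}=0\), kill the term \(Q_{j}\tilde{C}_{j}(\tilde{I}_{j}-\tilde{P}_{j})u_{j}\) by the adjoint of the same identity, and conclude \(\tilde{P}_{j}u_{j}=0\) from \(\ker(Q_{j}\tilde{C}_{j})\subset\ker\tilde{P}_{j}\) together with \(\operatorname*{range}\tilde{P}_{j}\cap\operatorname*{range}(\tilde{I}_{j}-\tilde{P}_{j})=\{0\}\). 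The only cosmetic difference is that you argue for all \(\lambda_{j}\neq 0\) while the paper states it for \(\lambda_{j}>\tau\); the argument is identical.
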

Therefore, the coarse space takes the following simpler form 
\begin{equation}
    Z := \bigoplus_{j=1}^{J}
    \tilde{R}_{j}^{*} \tilde{D}_{j}^{}
    \operatorname*{span}
    \left\{
        u_{j} \;|\;
        (\lambda_{j}, u_{j}) \ \text{solution of~\eqref{eq:gevp_ras_proj_harmonic} with }
        \lambda_{j} > \tau
    \right\}.
\end{equation}
The above local generalized eigenvalue
problems~\eqref{eq:gevp_ras_proj_harmonic} actually correspond to the ones
recently proposed in multiscale methods for generalized finite element methods,
see~\cite[Eq.~(3.13)]{Ma:2022:novel}.

\subsection{Additive Schwarz}\label{sec:as}

Our analysis also covers the case of the Additive Schwarz (AS)
method corresponding to the local solvers~\eqref{eq:as}, namely
\(S_{j}^{} := B_{j}^{-1}\).
In this case we obtain
\begin{equation}
    \tilde{S}_{j}
    = Q_{j}^{*} B_{j}^{-1} Q_{j},
    \qquad
    \tilde{L}_{j}
    = \tilde{D}_{j}^{} - Q_{j}^{*} B_{j}^{-1} Q_{j}^{} \tilde{R}_{j}^{} A \tilde{R}_{j}^{*},
    \qquad j=1,\dots,J,
\end{equation}
and the local generalized eigenvalue problems for \(j=1,\dots,J\) are then
\begin{equation}\label{eq:gevp_generic_as}
   \begin{cases}
        \text{Find}\ 
        (\lambda_{j},u_{j}) \in \mathbb{R} \times \operatorname*{range} \tilde{C}_{j}
        \ \text{such that}:\\
        P_{\tilde{C}_{j}}
        ( \tilde{D}_{j}^{} - Q_{j}^{*} B_{j}^{-1} Q_{j}^{} \tilde{R}_{j}^{} A \tilde{R}_{j}^{*} )^{*}
        ( \tilde{R}_{j}^{} C \tilde{R}_{j}^{*} )
        ( \tilde{D}_{j}^{} - Q_{j}^{*} B_{j}^{-1} Q_{j}^{} \tilde{R}_{j}^{} A \tilde{R}_{j}^{*} )^{}
        P_{\tilde{C}_{j}}
        u_{j}
        = \lambda_{j} \tilde{C}_{j} u_{j},
   \end{cases} 
\end{equation}
or equivalently with a more explicit formula:
\begin{equation}\label{eq:gevp_generic_as_smaller_decomposition}
   \begin{cases}
        \text{Find}\ 
        (\lambda_{j},u_{j}) \in \mathbb{R} \times \operatorname*{range} \tilde{C}_{j}
        \ \text{such that}:\\
        P_{\tilde{C}_{j}}
        \tilde{R}_{j}^{} 
        ( {D}_{j}^{} {R}_{j}^{} - B_{j}^{-1} {R}_{j}^{} A )^{*}
        ( {R}_{j}^{} C {R}_{j}^{*} )
        ( {D}_{j}^{} {R}_{j}^{} - B_{j}^{-1} {R}_{j}^{} A )^{}
        \tilde{R}_{j}^{*} 
        P_{\tilde{C}_{j}}
        u_{j}
        = \lambda_{j} \tilde{C}_{j} u_{j}.
   \end{cases} 
\end{equation}
The case of exact solvers corresponds as usual to taking
\(B_{j} = R_{j}^{} A R_{j}^{*}\).
The connection with the GenEO method is detailed in \S~\ref{sec:geneo} below.

\subsection{Symmetrized Optimized Restricted Additive Schwarz}\label{sec:soras}

Our analysis also covers the case of the Symmetrized Optimized Restricted
Additive Schwarz (SORAS) method corresponding to the local
solvers~\eqref{eq:soras}, namely 
\(S_{j}^{} := D_{j}^{} B_{j}^{-1} D_{j}^{}\).
In this case we obtain
\begin{equation}
    \tilde{S}_{j}
    = \tilde{D}_{j}^{} Q_{j}^{*} B_{j}^{-1} Q_{j} \tilde{D}_{j}^{},
    \qquad
    \tilde{L}_{j}
    = \tilde{D}_{j}^{} (\tilde{I}_{j} - Q_{j}^{*} B_{j}^{-1} Q_{j}^{} \tilde{D}_{j}^{} \tilde{R}_{j}^{} A \tilde{R}_{j}^{*}),
    \qquad j=1,\dots,J,
\end{equation}
and the local generalized eigenvalue problems for \(j=1,\dots,J\) are then
\begin{equation}\label{eq:gevp_generic_soras}
   \begin{cases}
        \text{Find}\ 
        (\lambda_{j},u_{j}) \in \mathbb{R} \times \operatorname*{range} \tilde{C}_{j}
        \ \text{such that}:\\
        P_{\tilde{C}_{j}}
        ( \tilde{I}_{j} - Q_{j}^{*} B_{j}^{-1} Q_{j}^{} \tilde{D}_{j}^{} \tilde{R}_{j}^{} A \tilde{R}_{j}^{*} )^{*}
        \tilde{D}_{j}^{}
        ( \tilde{R}_{j}^{} C \tilde{R}_{j}^{*} )
        \tilde{D}_{j}^{}
        ( \tilde{I}_{j} - Q_{j}^{*} B_{j}^{-1} Q_{j}^{} \tilde{D}_{j}^{} \tilde{R}_{j}^{} A \tilde{R}_{j}^{*} )^{}
        P_{\tilde{C}_{j}}
        u_{j}\\
        \qquad\qquad= \lambda_{j} \tilde{C}_{j} u_{j},
   \end{cases} 
\end{equation}
or equivalently with a more explicit formula:
\begin{equation}\label{eq:gevp_generic_soras_smaller_decomposition}
   \begin{cases}
        \text{Find}\ 
        (\lambda_{j},u_{j}) \in \mathbb{R} \times \operatorname*{range} \tilde{C}_{j}
        \ \text{such that}:\\
        P_{\tilde{C}_{j}}
        \tilde{R}_{j}^{} 
        ( R_{j}^{} - B_{j}^{-1} {D}_{j}^{} {R}_{j}^{} A )^{*}
        {D}_{j}^{}
        ( {R}_{j}^{} C {R}_{j}^{*} )
        {D}_{j}^{}
        ( R_{j}^{} - B_{j}^{-1} {D}_{j}^{} {R}_{j}^{} A )^{}
        \tilde{R}_{j}^{*} 
        P_{\tilde{C}_{j}}
        u_{j}
        = \lambda_{j} \tilde{C}_{j} u_{j}.
   \end{cases} 
\end{equation}
The case of exact solvers corresponds as usual to taking
\(B_{j} = R_{j}^{} A R_{j}^{*}\).

\begin{remark}
    For this method, it may not be necessary to extend the decomposition as in
    Section~\ref{sec:extension} if the partition of unity vanishes on the
    boundary of the subdomains.
    Indeed, then \(D_j R_j A= D_j R_j A R_j^{*} R_j\) so that
    in~\eqref{eq:one-level-prop-error} the most right term can be rewritten as
    follows:
    \begin{equation}
      S_j R_j A = D_j B_j^{-1} D_j R_j A = D_j B_j^{-1} D_j R_j A R_j^{*} R_j.
    \end{equation}
    Then, the propagation error operator (see~\eqref{eq:one-level-prop-error})
    of the one-level method reads:
    \begin{align}
        I - \sum_{j=1}^{J} M_{j}^{-1} A
        & = I - \sum_{j=1}^{J} R_{j}^{*} S_{j} R_{j}^{} A
        = \sum_{j=1}^{J} R_{j}^{*} (D_{j}^{} - D_j B_j^{-1} D_j R_j A R_j^{*}) R_j,
    \end{align}
    is decomposed as a sum of local contributions without the need for an
    extension of the subdomains. 
\end{remark}

The connection with the SORAS-GenEO method is detailed in \S~\ref{sec:geneo2-soras} below.

\section{Comparison with existing coarse spaces}\label{sec:comparision_with_existing_CS}

We compare the coarse spaces obtained from the above analysis with alternative
constructions already available in the literature for SPD problems.
In this section we take \(C=A\) to be SPD.

\subsection{Standard GenEO coarse space for Additive Schwarz}\label{sec:geneo}

We can compare with the standard two-level GenEO preconditioner for the
Additive Schwarz (AS) method with exact solvers as given
in~\cite{Dolean:2015:IDDSiam} (which is slightly different from but very
similar to the one in~\cite{Spillane:2014:ASC}).
Since the analysis in this reference is based on the fictitious lemma, the
estimates are finer but restricted to symmetric preconditioners.
The algorithm considered there corresponds to defining the local solvers as 
\begin{equation}
    S_{j}^{} := (R_{j}^{} A R_{j}^{*})^{-1},
    \qquad j=0,\dots,J.
\end{equation}
Both the local preconditioners \(M_{j}^{-1}\), \(j=1,\dots,J\), for the first
level and the coarse space preconditioner \(M_{0}^{-1}\) for the second level
are applied in an additive fashion, namely using our notations the full
two-level preconditioner is~\cite[Eq.~(7.6)]{Dolean:2015:IDDSiam}
\begin{equation}
    M_{\text{GenEO}}^{-1} := \sum_{j=0}^{J} M_{j}^{-1},
    \qquad M_{j} := R_{j}^{*} S_{j}^{} R_{j}^{}
    = R_{j}^{*} (R_{j}^{} A R_{j}^{*})^{-1} R_{j}^{},
    \quad j=0,\dots,J.
\end{equation}
The local generalized eigenvalue problems are derived from different
considerations than the ones above.
They are given in~\cite[Def.~7.14]{Dolean:2015:IDDSiam}, 
and correspond to
\begin{equation}\label{eq:gevp_geneo}
   \begin{cases}
        \text{Find}\ 
        (\lambda_{j},u_{j}) \in \mathbb{R} \times \operatorname*{range} {A}_{j}^{N}
        \ \text{such that}:\\
        P_{{A}_{j}^{N}}
        D_{j}^{} R_{j}^{} A R_{j}^{*} D_{j}^{}
        P_{{A}_{j}^{N}}
        u_{j}
        = \lambda_{j} {A}_{j}^{N} u_{j},
   \end{cases} 
   \qquad j=1,\dots,J,
\end{equation}
where \({A}_{j}^{N}\) are the so-called local Neumann matrices
associated to the decomposition \(R_{j}^{}\)
and \(P_{{A}_{j}^{N}}\) is the orthogonal projection on
\(\operatorname*{range} {A}_{j}^{N}\).
The associated coarse space is defined in~\cite[Def.~7.16]{Dolean:2015:IDDSiam}
and corresponds to
\begin{equation}
    Z_{\text{GenEO}} := \bigoplus_{j=1}^{J}
    R_{j}^{*} D_{j}^{} 
    \left[
    \ker {A}_{j}^{N} \bigoplus \operatorname*{span}
    \left\{
        u_{j} \;|\;
        (\lambda_{j}, u_{j}) \ \text{solution of~\eqref{eq:gevp_geneo} with }
        \lambda_{j} > \tau
    \right\}
    \right].
\end{equation}
To make a better comparison with Section~\ref{sec:ras_harmonic} let us assume
also here that \(A_{j}^{N}\) is invertible so that the local generalized
eigenvalue problems for the GenEO method are
\begin{equation}\label{eq:gevp_geneo_ANnoker}
   \begin{cases}
        \text{Find}\ 
        (\lambda_{j},u_{j}) \in \mathbb{R} \times
        \mathbb{C}^{\#{\mathcal{N}}_{j}}
        \ \text{such that}:\\
        D_{j}^{} R_{j}^{} A R_{j}^{*} D_{j}^{}
        u_{j}
        = \lambda_{j} {A}_{j}^{N} u_{j},
   \end{cases} 
   \qquad j=1,\dots,J.
\end{equation}
The associated GenEO coarse space is
\begin{equation}
    Z_{\text{GenEO}} := \bigoplus_{j=1}^{J}
    R_{j}^{*} D_{j}^{} 
    \left[
    \operatorname*{span}
    \left\{
        u_{j} \;|\;
        (\lambda_{j}, u_{j}) \ \text{solution of~\eqref{eq:gevp_geneo} with }
        \lambda_{j} > \tau
    \right\}
    \right].
\end{equation}
It is remarkable that the difference between~\eqref{eq:gevp_ras_proj_harmonic}
and~\eqref{eq:gevp_geneo_ANnoker} essentially lies in the space in which one
looks for the eigenvectors.
In~\eqref{eq:gevp_ras_proj_harmonic}, the space is made of discrete
\(A\)-harmonic vectors whereas in~\eqref{eq:gevp_geneo_ANnoker} the vectors are
\(A\)-harmonic only for degrees of freedom for which the partition of unity is
equal to one. 
The other difference is that~\eqref{eq:gevp_ras_proj_harmonic} requires an
extended decomposition.

\subsection{GenEO coarse space for SORAS}\label{sec:geneo2-soras}

The literature based on the fictitious space lemma proposes for the SORAS
variant the construction of coarse spaces using the solutions of two local
generalized eigenproblems,
see~\cite{Haferssas:ASM:2017} and~\cite[Sec.~7.7]{Dolean:2015:IDDSiam}.
More precisely, the two local generalized eigenproblems considered are
\begin{equation}\label{eq:gevp_geneo_soras_1}
   \begin{cases}
        \text{Find}\ 
        (\lambda_{j},u_{j}) \in \mathbb{R} \times
        \mathbb{C}^{\#\tilde{\mathcal{N}}_{j}}
        \ \text{such that}:\\
        D_{j}^{} R_{j}^{} A R_{j}^{*} D_{j}^{}
        u_{j}
        = \lambda_{j} {B}_{j}^{} u_{j},
   \end{cases} 
   \qquad j=1,\dots,J,
\end{equation}
\begin{equation}\label{eq:gevp_geneo_soras_2}
   \begin{cases}
        \text{Find}\ 
        (\lambda_{j},u_{j}) \in \mathbb{R} \times 
        \mathbb{C}^{\#\tilde{\mathcal{N}}_{j}}
        \ \text{such that}:\\
        A_{j}^{N}
        u_{j}
        = \lambda_{j} {B}_{j}^{} u_{j},
   \end{cases} 
   \qquad j=1,\dots,J,
\end{equation}
where here the local matrices \(B_{j}\) are SPD matrices which typically come
from the discretization of boundary value local problems using optimized
transmission conditions (e.g.\ Robin boundary conditions).
For two thresholds \(\tau>0\) and \(\gamma>0\),
the associated coarse space is
\(Z = Z_{\tau} \oplus Z_{\gamma}\)
where
\begin{align}
    & Z_{\tau} := 
    \bigoplus_{j=1}^{J}
    R_{j}^{*} D_{j}^{} 
    \left[
    \operatorname*{span}
    \left\{
        u_{j} \;|\;
        (\lambda_{j}, u_{j}) \ \text{solution of~\eqref{eq:gevp_geneo_soras_1} with }
        \lambda_{j} > \tau
    \right\}
    \right],\\
    \label{eq:gammageneo2}
    & Z_{\gamma} := 
    \bigoplus_{j=1}^{J}
    R_{j}^{*} D_{j}^{} 
    \left[
    \operatorname*{span}
    \left\{
        u_{j} \;|\;
        (\lambda_{j}, u_{j}) \ \text{solution of~\eqref{eq:gevp_geneo_soras_2} with }
        \lambda_{j} < \gamma
    \right\}
    \right].
\end{align}
As a result these coarse spaces differ fundamentally from the one we propose
above which are based on the local generalized
eigenproblems~\eqref{eq:gevp_generic_soras}.

\section{Numerical experiments}

All numerical results presented below are obtained using the FreeFEM
software~\cite{Hecht:2012:NDF}.

\subsection{Non-symmetric and symmetric preconditioners for SPD problems}

We consider first the following two-dimensional model problem
\begin{equation}
    \begin{cases}
        -\operatorname{div} \nu \operatorname{grad} u + \eta u = f,
        &\qquad\text{in}\ \Omega,\\
        \partial_{\mathbf{n}}u + u = 0, 
        &\qquad\text{in}\ \Gamma_{R},\\
        \partial_{\mathbf{n}}u = 0, 
        &\qquad\text{in}\ \Gamma_{N},
    \end{cases}
\end{equation}
where the domain is a square
\(\Omega := (0,\ell)\times(0,\ell)\)
with \(\ell\) the square root of the number of subdomains, \(\ell:=\sqrt{J}\), 
and its boundary is given by
\(\Gamma_{R} := (0,\ell)\times\{0\}\) and
\(\Gamma_{N} := (0,\ell)\times\{\ell\} \cup \{0,\ell\}\times(0,\ell)\).
The outward normal vector to \(\partial\Omega\) is denoted \(\mathbf{n}\).

The source term is \(f=1\), the coefficient \(\eta=10^{-8}\)
(to ensure well-posedness of all local subdomain problems),
and the coefficient \(\nu\) is either constant equal to \(1\)
(we refer to this situation as the homogeneous coefficient case)
or piecewise constant with high-contrast
(we refer to this situation as the heterogeneous coefficient case):
\(\nu=1+10^{5}\) on \((2\ell/10,4\ell/10)\times(0,1)\);
\(\nu=1+10^{4}\) on \((6\ell/10,8\ell/10)\times(0,1)\);
and \(\nu=1\) otherwise.

By standard arguments the bilinear form associated to the above model problem
is symmetric and coercive (positive definite).
After discretization using conformal \(\mathbb{P}_{1}\) Lagrange finite
elements on a regular triangular mesh, the associated matrix of the linear
system \(A\) is then SPD.

\subsubsection{Exact solvers}

To investigate the efficiency of our two-level coarse space construction on various
one-level methods we consider three variants of additive Schwarz methods:
\begin{enumerate}
    \item Restricted Additive Schwarz (RAS), see \S~\ref{sec:ras};
    \item Additive Schwarz (AS), see \S~\ref{sec:as};
    \item Symmetrized Optimized Restricted Additive Schwarz (SORAS), see \S~\ref{sec:soras}.
        The local matrices involved in the local solvers are obtained from the
        bilinear form of the original problem by adding a term stemming from
        the artificial Robin boundary condition \(\partial_{\mathbf{n}}u +
        \nu/\sqrt{h}\; u=0\) on the artificial boundary, where \(h\) is the
        local mesh size, see~\cite{Gander:2006:OSM} for the connection with
        Optimized or order 0 interface conditions.
\end{enumerate}
We report numerical results for three coarse space constructions:
\begin{enumerate}
    \item our extended GenEO coarse space construction given in \S~\ref{sec:new_cs}
        (labelled `extended' in the legends of the forthcoming plots).
        Notice that this coarse space \emph{does} depend on the one-level
        preconditioner.
        The extended decomposition described in \S~\ref{sec:extension} is
        achieved by extending the overlap by one additional layer of cells.
        Since \(A\) is SPD, we set \(C=A\) and we use Robin matrices for the
        local matrices \(\tilde{C}_{j}\), i.e.~matrices
        assembled from the bilinear form of the original problem with an
        artificial Robin boundary condition
        \(\partial_{\mathbf{n}}u+10^{-4}u=0\) on the artificial boundary.
        This particular choice of local matrices is made to numerically
        stabilize the eigensolvers.
        Since the Robin parameter is small, this matrix is numerically similar
        to the Neumann matrix (which arises in the GenEO eigenproblems).
        Since the local matrices \(\tilde{C}_{j}\) are invertible, the
        projection operators \(P_{\tilde{C}_{j}}\) in the local generalized
        eigenproblems become the identity.
    \item a standard GenEO coarse space as detailed in \S~\ref{sec:geneo}
        (labelled `GenEO' in the legends of the forthcoming plots).
        This coarse space does \emph{not} depend on the one-level
        preconditioner, but its construction is supported by theoretical
        analysis for one-level AS only.
        Thus, when used together with the RAS or SORAS one-level preconditioners,
        we use the coarse space constructed for the AS preconditioner.
        Since the local matrices \(A^{N}_{j}\) are invertible, the
        projection operators \(P_{A^{N}_{j}}\) in the local generalized
        eigenproblems become the identity.
    \item a GenEO-2 coarse space, for one-level SORAS only, as detailed in
        \S~\ref{sec:geneo2-soras} (labelled `GenEO-2' in the legends of the
        forthcoming plots).
\end{enumerate}
The efficiency of these three coarse spaces is compared with the pure one-level
method without coarse space (in which case the legend entry only features the
type of the one-level method).

In all three cases we consider a two-level preconditioner \(M^{-1}\)
constructed by adding a multiplicative global coarse correction to the
first-level residual propagation operator, see~\eqref{eq:full_preconditioner}.
The thresholds for selecting the admissible eigenvalues in the construction of
the coarse spaces are set to \(\tau=10\) for the extended GenEO coarse space
and the GenEO coarse space, and \(\gamma=1/10\) for GenEO-2,
see~\eqref{eq:gammageneo2}.
The eigenvalue problems are solved using SLEPc.

We report weak scaling results, with around \(N_{j} \approx 1600\) degrees of
freedom in each subdomain.
The overlapping decompositions are obtained thanks to the automatic graph
partitioner Metis.
There are about \(4\) layers of cells in the overlap and the partition of unity
is such that it vanishes inside the subdomain before reaching the subdomain
boundary (so that also its derivatives vanish at the boundary).
We report below convergence results of the GMRES algorithm applied to the
original linear system~\eqref{eq:syslin} with right-preconditioning by the
one-level or two-level preconditioners.
We also report results obtained with GMRES without preconditioning, which are
labelled as `none' in the legends of the forthcoming plots.
The convergence tolerance on the relative residual is set to \(10^{-8}\) and
\(10^{-6}\) for the homogeneous and heterogeneous medium respectively
and the maximum number of iterations if the
tolerance is not reached is fixed to \(200\) iterations.

A convergence history for \(J=50\) subdomains, the iteration count of the
GMRES algorithm to reach the given tolerance and the relative coarse space size
(i.e.\ the total size of the coarse space divided by the number of subdomains)
are reported in Figure~\ref{fig:reglaplacian_homogeneous_raspart} for the
homogeneous medium and in Figure~\ref{fig:reglaplacian_heterogeneous_raspart}
for the heterogeneous medium.
The first observation is that GMRES without preconditioning does not even get
one digit of accuracy in 200 iterations, which is the motivation to use domain
decomposition methods in the first place.
One the other hand, all one-level methods without coarse space exhibit some
convergence for small enough problems.
However, the convergence deteriorates as the size of the problem increases, the
number of iterations growing proportionally to the square root of the number of
subdomains which is also the number of subdomains in one direction since we are
in two-dimensions.
For large enough problems, the convergence of the algorithm can stall before
reaching convergence (which explains that sometimes no iteration count is
reported in the plots).
This effect is worsened in the case of heterogeneous medium.
This is the motivation for two-level methods.

In contrast, we observe overall good scalability properties of all two-level
methods tested, the lowest iteration counts being obtained when the underlying
one-level preconditioner is RAS.
We notice a slightly reduced iteration count for one-level preconditioners RAS
and AS with the GenEO coarse space compared to the extended GenEO coarse space,
but the size of the extended GenEO coarse space is comparatively smaller.
Note that in this case, our extended GenEO coarse space is very
similar to~\cite{Ma:2024:TLR} where they also reported smaller coarse space
sizes.
We recall that to the best of our knowledge no general theory was available for
the GenEO coarse space together with the non-symmetric one-level preconditioner
RAS.
For one-level preconditioner SORAS we notice a slightly increased iteration
count when using the GenEO-2 coarse space compared to the GenEO coarse space,
despite the latter having a larger coarse space, but we note that a complete
analysis theory is available only for the former, 
see~\cite{Haferssas:ASM:2017} and~\cite[Sec.~7.7]{Dolean:2015:IDDSiam}.

\begin{figure}[p]
    \centering
        {\includegraphics[height=0.175\textheight]{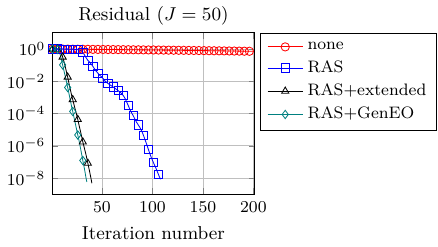}}
        \hfill
        {\includegraphics[height=0.175\textheight]{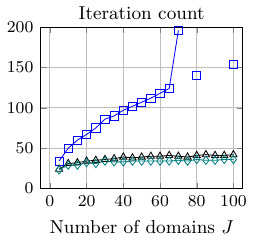}}
        {\includegraphics[height=0.175\textheight]{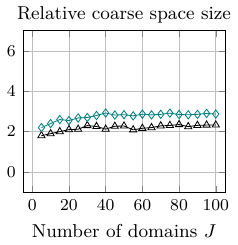}}
        {\includegraphics[height=0.175\textheight]{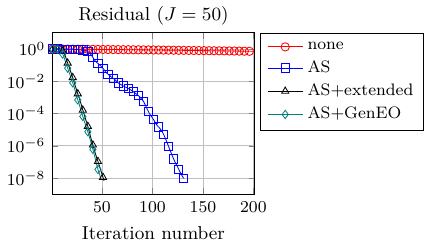}}
        \hfill
        {\includegraphics[height=0.175\textheight]{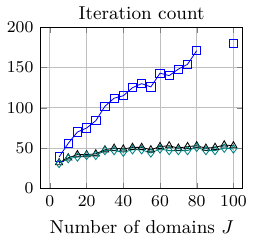}}
        {\includegraphics[height=0.175\textheight]{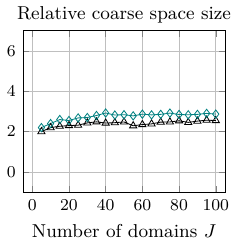}}
        {\includegraphics[height=0.175\textheight]{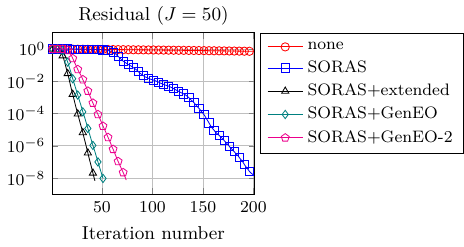}}
        \hfill
        {\includegraphics[height=0.175\textheight]{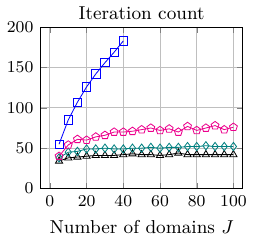}}
        {\includegraphics[height=0.175\textheight]{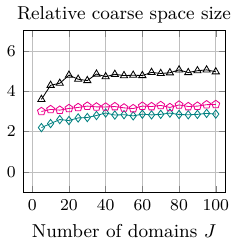}}
    \caption{
        Regularized Laplacian on homogeneous medium
        for one-level
        RAS (top), AS (middle) and SORAS (bottom).
    }\label{fig:reglaplacian_homogeneous_raspart}
\end{figure}

\begin{figure}[p]
    \centering
        {\includegraphics[height=0.175\textheight]{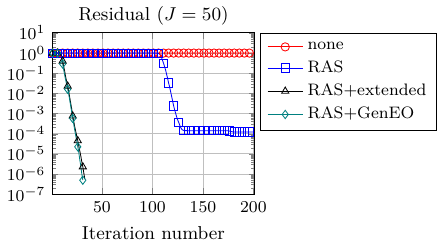}}
        \hfill
        {\includegraphics[height=0.175\textheight]{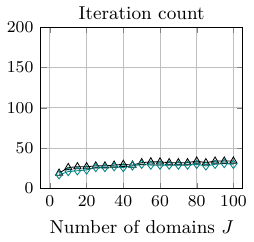}}
        {\includegraphics[height=0.175\textheight]{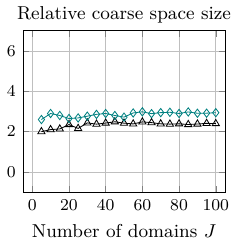}}
        {\includegraphics[height=0.175\textheight]{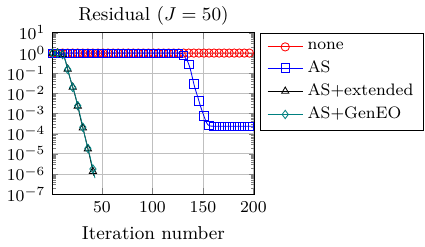}}
        \hfill
        {\includegraphics[height=0.175\textheight]{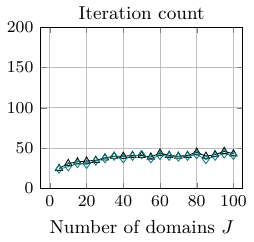}}
        {\includegraphics[height=0.175\textheight]{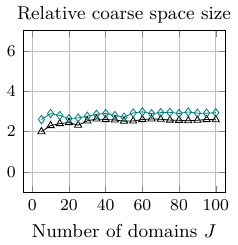}}
        {\includegraphics[height=0.175\textheight]{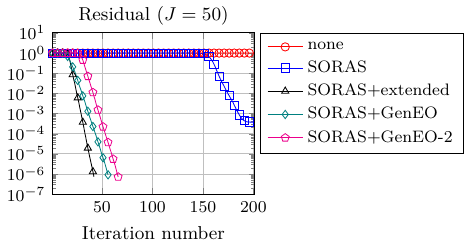}}
        \hfill
        {\includegraphics[height=0.175\textheight]{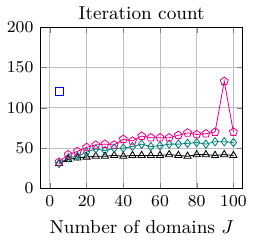}}
        {\includegraphics[height=0.175\textheight]{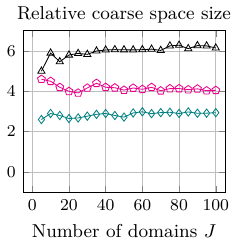}}
    \caption{
        Regularized Laplacian on heterogeneous medium
        for one-level
        RAS (top), AS (middle) and SORAS (bottom).
    }\label{fig:reglaplacian_heterogeneous_raspart}
\end{figure}

\subsubsection{Inexact solvers}

In order to speed up the one-level RAS method, local solves can be approximated
by a forward-backward substitution with an incomplete Cholesky factorization
(ICC).
This one level preconditioner is a common choice for a lightweight parallel
solver when using the PETSc library.

We repeat the previous numerical experiments using the incomplete Cholesky
factorization for the one-level RAS method.
The construction of the extended GenEO coarse space is affected by this change, and the
associated theory developed above covers this particular case, see
\S~\ref{sec:ras_inexact}.
We compare with the standard GenEO coarse space of the previous experiment,
which is not modified.
We note that the available GenEO coarse space
analysis allowing for inexact solvers~\cite[Sec.~4.5.3]{Spillane2025} (with
numerical results~\cite[Fig.~4]{Spillane2025}) features a coarse
space construction with two eigenproblems that is different from the one we
test here and has theoretical guarantees valid when used with a symmetric
preconditioner, which is not the case here.

A convergence history for \(J=50\) subdomains, the iteration count of the
GMRES algorithm to reach the given tolerance and the relative coarse space size
(i.e.\ the total size of the coarse space divided by the number of subdomains)
are reported in Figure~\ref{fig:reglaplacian_inexact_raspart}.
Again we observe that two-level preconditioning seems mandatory to obtain
robust GMRES convergence in a reasonable number of iterations.
The iteration count for both coarse spaces (extended GenEO and GenEO) is very
similar.
This is achieved with a slightly smaller coarse space size for the extended
GenEO coarse space compared to GenEO.
Compared to the case of exact solvers, the iteration count is significantly
increased.
To better understand this increase, we report that a GMRES solver on a single
domain of size similar to a single subdomain converges to the target tolerance
in \(63\) and \(89\) iterations for the homogeneous and heterogeneous
respectively.
If the second level cannot correct this inevitable increase, the preconditioner
is now scalable.
Besides, the use of approximate solvers has reduced the computational cost of
each iteration, as well as the memory load, compared to direct solvers.

\begin{figure}[p]
    \centering
        {\includegraphics[height=0.175\textheight]{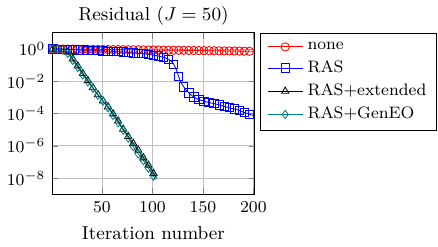}}
        {\includegraphics[height=0.175\textheight]{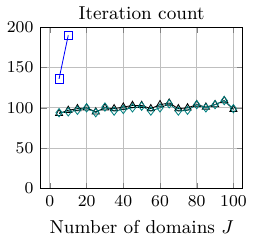}}
        {\includegraphics[height=0.175\textheight]{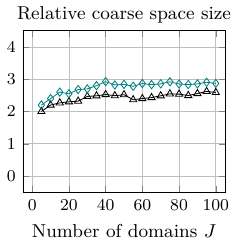}}
        {\includegraphics[height=0.175\textheight]{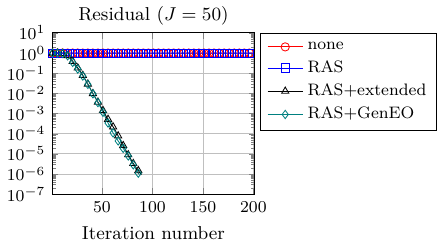}}
        {\includegraphics[height=0.175\textheight]{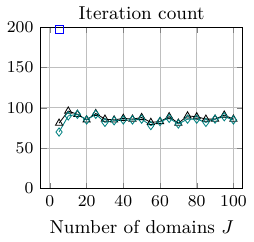}}
        {\includegraphics[height=0.175\textheight]{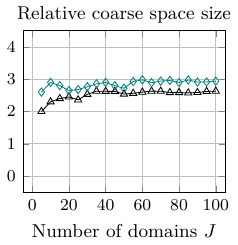}}
    \caption{
        Regularized Laplacian on homogeneous (top) and heterogeneous (bottom)
        medium, incomplete Cholesky factorization (ICC) of local matrices used
        in the one-level preconditioner and in the construction of the extended
        GenEO coarse space.
    }\label{fig:reglaplacian_inexact_raspart}
\end{figure}

\subsection{Non-symmetric problem}

Using the notations of the previous subsection,
we consider now the following two-dimensional non-symmetric problem
\begin{equation}
    \begin{cases}
        -\operatorname{div} \nu \operatorname{grad} u
        +\operatorname{div} (\mathbf{b}u)
        + \eta u = f,
        &\qquad\text{in}\ \Omega,\\
        \partial_{\mathbf{n}}u + u = 0, 
        &\qquad\text{in}\ \Gamma_{R},\\
        \partial_{\mathbf{n}}u = 0, 
        &\qquad\text{in}\ \Gamma_{N},
    \end{cases}
\end{equation}
where \(\mathbf{b} \in \mathbb{R}^{2}\).
We consider two different choices for \(\mathbf{b}\),
\begin{itemize}
    \item \(\mathbf{b} = [1, 0]^{T}\), referred to as the constant advection
        test case;
    \item \(\mathbf{b} = [(2y-1)\pi, (2x-1)\pi]^{T}\), a divergence free
        rotating flow, referred as the varying advection test case.
\end{itemize}
The presence of the first order term in the differential operator makes the
problem non-symmetric and the same discretization method yields a non-symmetric
matrix \(A\).

The discrete variational formulation is stabilized using the Streamline Upwind
Petrov--Galerkin method, see e.g.~\cite{Bonazzoli:2020:ASD}
and~\cite[\S~11.8.6]{Quarteroni2009} for the details.

In this case, the matrix \(A\) is not SPD and we construct \(C\) from the
symmetric part of \(A\).
In order to numerically stabilize the eigensolvers, the local matrices
\(\tilde{C}_{j}\) are again taken to be the Robin matrices formed from the
symmetric part of the bilinear form with an artificial Robin boundary condition
\(\partial_{\mathbf{n}}u+10^{-4}u=0\) on the artificial boundary.

For this non-symmetric problems, we only consider the ORAS preconditioner for
the one-level method and the extended GenEO coarse space for the second-level.
The local matrices involved in the local solvers are obtained from the skew symmetric
bilinear form of the original problem by adding a term stemming from
the artificial Robin boundary condition \(\partial_{\mathbf{n}}u +
\sqrt{\mathbf{b}\cdot\mathbf{n}+4\nu}/(2\sqrt{h})\; u=0\) on the artificial
boundary, where \(h\) is the local mesh size.
Given the increased difficulty of these numerical experiments, we vary the
threshold in the selection of the eigenvalues \(\tau \in \{10,1\}\) in the
construction of the coarse space.
All other parameters are retained from the previous results.

A convergence history for \(J=50\) subdomains, the iteration count of the
GMRES algorithm to reach the given tolerance and the relative coarse space size
(i.e.\ the total size of the coarse space divided by the number of subdomains)
are reported in Figure~\ref{fig:convdiff_homogeneous} for \(\nu=1\),
in Figure~\ref{fig:convdiff_heterogeneous} for a medium with a large contrast in \(\nu\)
and in Figure~\ref{fig:convdiff_homogeneous_smallnu} for \(\nu=10^{-3}\).
For this non-symmetric problem, we observe again a clear gain in robustness and
scalability using the second-level method.
However, we notice that it may happen that the one-level method converges in marginally
fewer iterations than the two-level one which is not in contradiction with the theory.
Decreasing the threshold \(\tau\) involves significantly larger coarse spaces
but implies a decrease on the iteration count.

\begin{figure}[p]
    \centering
        {\includegraphics[height=0.175\textheight]{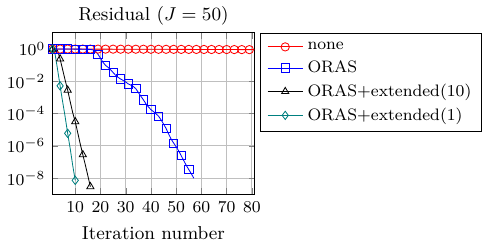}}
        {\includegraphics[height=0.175\textheight]{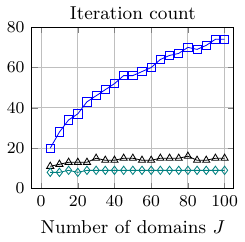}}
        {\includegraphics[height=0.175\textheight]{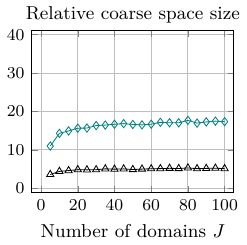}}
        {\includegraphics[height=0.175\textheight]{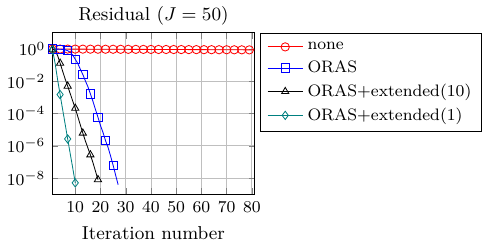}}
        {\includegraphics[height=0.175\textheight]{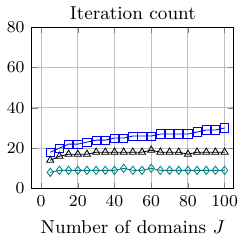}}
        {\includegraphics[height=0.175\textheight]{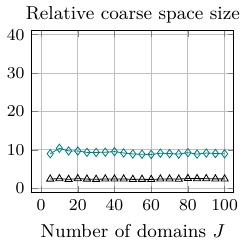}}
    \caption{
        Convection-diffusion with constant \(\nu=1\); for constant advection (top) and varying advection (bottom).
        The number in the parenthesis is the threshold \(\tau\) for the eigenvalue selection.
    }\label{fig:convdiff_homogeneous}
\end{figure}

\begin{figure}[p]
    \centering
        {\includegraphics[height=0.17\textheight]{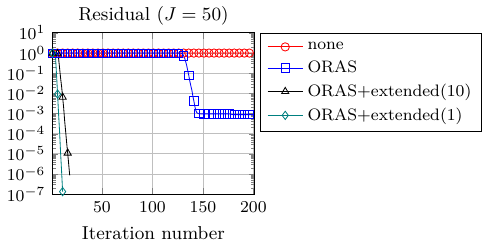}}
        {\includegraphics[height=0.17\textheight]{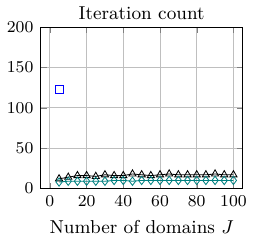}}
        {\includegraphics[height=0.17\textheight]{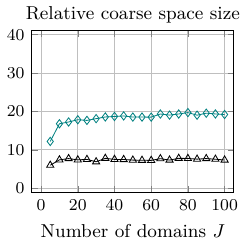}}
        {\includegraphics[height=0.17\textheight]{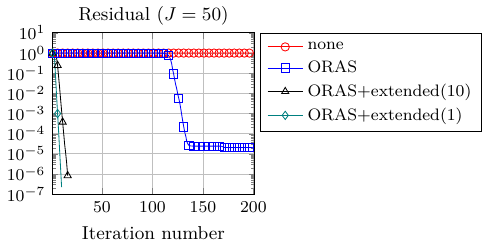}}
        {\includegraphics[height=0.17\textheight]{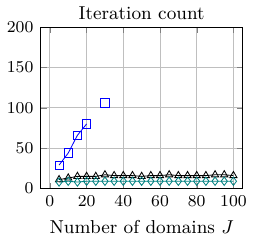}}
        {\includegraphics[height=0.17\textheight]{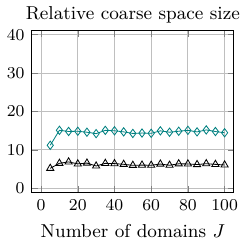}}
    \caption{
        Convection-diffusion with large variations of \(\nu\):
        (\(\nu=1+10^{5}\) on \((2\ell/10,4\ell/10)\times(0,1)\)
        \(\nu=1+10^{4}\) on \((6\ell/10,8\ell/10)\times(0,1)\)
        and \(\nu=1\) otherwise);
        for constant advection (top) and varying advection (bottom).
        The number in the parenthesis is the threshold \(\tau\) for the eigenvalue selection.
    }\label{fig:convdiff_heterogeneous}
\end{figure}

\begin{figure}[p]
    \centering
        {\includegraphics[height=0.175\textheight]{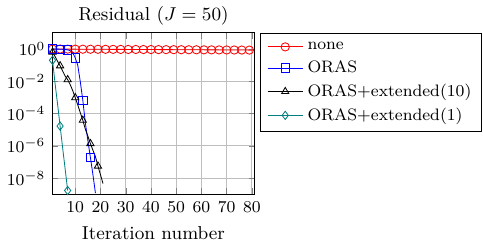}}
        {\includegraphics[height=0.175\textheight]{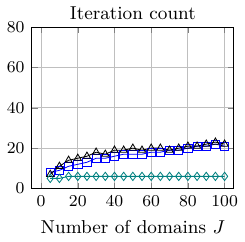}}
        {\includegraphics[height=0.175\textheight]{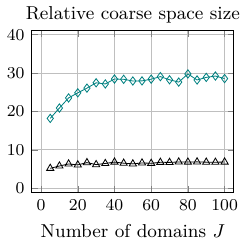}}
        {\includegraphics[height=0.175\textheight]{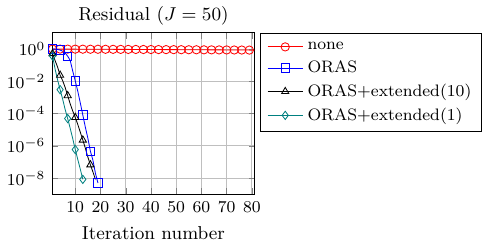}}
        {\includegraphics[height=0.175\textheight]{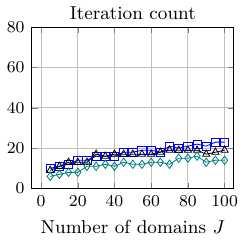}}
        {\includegraphics[height=0.175\textheight]{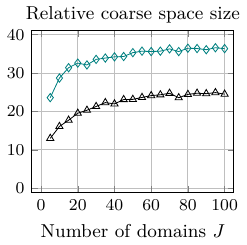}}
    \caption{
        Convection-diffusion with constant \(\nu=10^{-3}\); for constant
        advection (top) and varying advection (bottom).
        The number in the parenthesis is the threshold \(\tau\) for the
        eigenvalue selection.
    }\label{fig:convdiff_homogeneous_smallnu}
\end{figure}

\section{Conclusion}

We have performed a general analysis of adaptive coarse spaces that applies to
symmetric and non-symmetric problems, to symmetric preconditioners such the
additive Schwarz (AS) method and the non-symmetric preconditioner restricted
additive Schwarz (RAS), as well as to exact or inexact subdomain solves.
This led us to the design of an extended GenEO coarse space in Definition~\ref{def:newCS}.
The coarse space is built by solving generalized eigenvalues in the subdomains
and applying a well-chosen operator to the selected eigenvectors.

Note that the standard theory of adaptive coarse space for the AS method makes
use of the stable decomposition concept~\cite{Xu:1989:TMM} or of the Fictitious
Space Lemma~\cite{Nepomnyaschikh:1991:MTT,Griebel:1995:ATA} valid in the
framework of symmetric positive operators. It yields sharp spectral estimates
on the preconditioned operator. Here we do not use these theories and our
results are restricted to estimates of the norm of the error propagation
operator. However, they hold for non-symmetric operators and/or preconditioners.

\section*{Acknowledgments}
The authors would like to thank Pierre-Henri Tournier and Pierre Jolivet for
their help with the FreeFEM implementation, Marcella Bonazzoli for her FreeFEM
script on the convection-diffusion problem, Pierre Marchand and Nicole Spillane
for useful discussions.

\printbibliography[heading=bibintoc,title={References}]

\end{document}